\makeatletter \@addtoreset{equation}{section} \makeatother
\renewcommand\thetable{\thesection.\@arabic\c@table}
\theoremstyle{plain}
\newtheorem{theorem}{Theorem}[section]
\newtheorem{proposition}{Proposition}[section]
\newtheorem{lemma}{Lemma}[section]
\newtheorem{corollary}{Corollary}[section]
\newtheorem{definition}{Definition}[section]
\newtheorem{remark}{Remark}[section]
\newtheorem{question}{Question}[section]
\newcommand{\diam}{\operatorname{diam}}
\begin{document}

\title{Measure theoretic pressure and dimension formula for non-ergodic measures}


\author{Jialu Fang}
\address{School of Mathematical Sciences, Soochow University\\
  Suzhou 215006, Jiangsu, P.R. China}
\email{20154207025@stu.suda.edu.cn}

\author{Yongluo Cao}
\address{School of Mathematical Sciences, Soochow University\\
  Suzhou 215006, Jiangsu, P.R. China}
\address{Center for Dynamical Systems and Differential Equation, Soochow University\\
Suzhou 215006, Jiangsu, P.R. China}
\email{ylcao@suda.edu.cn}

\author{Yun Zhao}
\address{School of Mathematical Sciences, Soochow University\\
  Suzhou 215006, Jiangsu, P.R. China}
  \address{Center for Dynamical Systems and Differential Equation, Soochow University\\
Suzhou 215006, Jiangsu, P.R. China}
\email{zhaoyun@suda.edu.cn}

\thanks{Y. Cao is partially supported by NSFC (11790274,11771317), Y. Zhao is  partially supported by NSFC (11790274, 11871361).}

\date{\today}

\begin{abstract} This paper first studies the measure theoretic pressure of measures that are not necessarily ergodic. We define the measure theoretic pressure of an  invariant measure (not necessarily ergodic) via the Carath\'{e}odory-Pesin structure described in \cite{Pes97}, and show  that this quantity  is equal to the essential supremum of the free energy of the measures in an ergodic decomposition. To the best of our knowledge, this formula is new even for entropy. Meanwhile, we define the measure theoretic pressure in another way by using separated sets, it is showed that this quantity is exactly the free energy if the measure is ergodic. Particularly, if the dynamical system satisfies the uniform separation condition and the ergodic measures are entropy dense, this quantity is still equal to the the free energy even if the measure is non-ergodic. As an application of the main result, we find that the Hausdorff dimension of an invariant measure supported on an average conformal repeller is given by the zero of the measure theoretic pressure of this measure. Furthermore, if a hyperbolic diffeomorphism is average conformal and volume-preserving, the Hausdorff dimension of any invariant measure on the hyperbolic set is equal to the sum of the zeros of measure theoretic pressure restricted to stable and unstable directions.
\end{abstract}

\keywords{}

 \footnotetext{2010 {\it Mathematics Subject classification}:
 37A30, 37D35, 37C45}

\maketitle

\section{Introduction }
Let  $(X,f)$ be a topological dynamical system, i.e., $f$ is a continuous transformation on a compact metric space $X$. Given a continuous function $\varphi$ on $X$ and an $f$-invariant measure $\mu$, the following quantity
\[
h_{\mu}(f)+\int_X \varphi d\mu
\]
is called the \emph{free energy} of $(f,\varphi)$ with respect to $\mu$, where $h_{\mu}(f)$ is the Kolmogorov-Sinai  entropy of $f$ defined via measurable partitions (see Walters' book \cite{wal82} for more details).

The free energy is an important quantity in the study of dynamical system, e.g., it appears as rate function in the study of large deviations in dynamical system (see \cite{young}), and the well-known variational principle which relates  the topological pressure with free energy (see \cite{wal75}). To give other viewpoints for free energy, using ideas of Katok's entropy formula \cite{kat80},  in \cite{hlz} He et al. showed that the free energy can be regarded as the growth
rate of the  minimal value of a potential on an $(n,\epsilon,\delta)$-spanning set ( the union of these $n$-th Bowen balls centered at the points of this set has  measure at least $1-\delta$). In \cite{Pes97}, using the theory of Carath\'{e}odory-Pesin structure, Pesin gave an explanation for free energy from the dimensional viewpoint. The authors in \cite{chz} extended the works in \cite{hlz} and \cite{Pes97} for a larger class of potentials, e.g., the sub-additive and super-additive potentials, it is useful to estimate the dimension of an ergodic measure supported on non-conformal repellers (see \cite{chz} and \cite{wcz}). In \cite{zhao}, the third author of this paper extended the works in \cite{hlz} and \cite{Pes97} to amenable group actions.

We would like to stress that all the results mentioned above require that the measure to be ergodic. However, we know nothing if the measure is non-ergodic.  In \cite{bw}, Barreira and Wolf studied the Hausdorff dimension and point-wise dimension of invariant measures that are not necessarily ergodic. Namely, for conformal expanding maps and hyperbolic diffeomorphisms on surfaces, they established formulas for the point-wise dimension of an invariant measure in terms of the local entropy and of the Lyapunov exponents. Furthermore, they proved that the Hausdorff dimension of an invariant measure is equal to the essential supremum of the Hausdorff dimensions of the measures in an ergodic decomposition.

 The results described above  indicate that a given global quantity has behind it or in certain case can even be build with the help of a local quantity. Another example is that the Kolmogorov-Sinai  entropy of an invariant measure can be obtained by integrating the Brin-Katok's local entropy (see \eqref{pt-global}). Motivated by Barreira and Wolf's results in dimension theory, following the approach described in \cite{Pes97}, we define the measure theoretic pressure from the dimensional viewpoint. Furthermore, we define the point-wise measure theoretic pressure by combining Brin-Katok's local entropy formula and Birkhoff's ergodic theorem. In this paper, we prove that the measure theoretic pressure of any invariant measure is  the essential supremum of the point-wise measure theoretic pressure (see Proposition \ref{super-pw}), then we show that the measure theoretic pressure of any invariant measure can be regarded as the essential supremum of the free energies of the measures in an ergodic decomposition (see Theorem \ref{mainthm}). We would like to point out that these results are new even for entropy.

 As an application of Theorem \ref{mainthm}, we show that the Hausdorff dimension of an invariant measure (not necessarily ergodic) supported on an average conformal repeller is given by the zero of the measure theoretic pressure of this measure (see Theorem \ref{dim-expand}), which can be regarded as Bowen's equation in the measure theoretic sense. To show this result, we first show that the Hausdorff dimension of an invariant measure on average conformal repellers of $C^1$ maps is equal to the essential supremum of the Hausdorff dimensions of the measures in an ergodic decomposition (see Theorem \ref{dim-dec}), which extends Barreira and Wolf's Hausdorff dimension formula (see \cite[Theorem 10]{bw}) to  $C^1$ maps and non-conformal case.
   Following the same approach, we can show that the Hausdorff dimension of an invariant measure supported on  average conformal hyperbolic sets of a $C^1$ volume-preserving diffeomorphism is given by the sum of the zeros of measure theoretic pressure restricted to stable and unstable directions (see Theorem \ref{dim-ach-mt}).

  This paper also defines the measure theoretic pressure of invariant measures (not necessary ergodic) by using $(n,\epsilon)$-separated sets and $(\delta,n,\epsilon)$-separated sets (see Section \ref{equiv-mtp} for details). For each ergodic measure, we show that the measure theoretic pressure defined in this way is exactly the free energy, which means that this definition of measure theoretic pressure is  respectively equivalent to the ones in \cite{hlz} and \cite{Pes97}. Furthermore, if the dynamical system has the uniform separation property (see Definition \ref{uniform-sep-p}) and the ergodic measures are entropy dense (see Definition \ref{def-eed}), then this kind of measure theoretic pressure is still equal to the free energy even if the measure is non-ergodic.

 This paper is organized as follows. In Section \ref{NP}, we introduce the concepts of point-wise measure theoretic pressure and measure theoretic pressure for any invariant measure. We also define the measure theoretic pressure via separated sets. In Section \ref{AR}, we give some auxiliary results that are useful in the proof of main results in this paper. Section \ref{main-result} contains the statements and detailed proofs of our main results. We first prove that the measure theoretic pressure is the essential supremum of the sum of Brin-Katok's local entropy and the limit of Birkhoff's sum of a continuous function, which helps us to show that the measure theoretic pressure is the essential supremum  of the sum of Kolmogorov-Sinai  entropy and the integral of the potential with respect to the measures in an ergodic decomposition. Meanwhile, we show that the measure theoretic pressure of ergodic measures defined by separated sets is equal to the free energy. In particular, the measure theoretic pressure defined via separated sets is still equal to the free energy even if the measure is non-ergodic, provided the system has the uniform separation property and the ergodic measures are entropy dense. Finally, we show that the zero of measure theoretic pressure is exactly the Hausdorff dimension of an invariant measure on average conformal repellers of a $C^1$ map; the Hausdorff dimension of an invariant measure supported on hyperbolic sets is the sum of the zeros of measure theoretic pressure restricted to stable and unstable directions, provided that the diffeomorphism is average conformal and volume-preserving.

\section{Notation and Preliminaries}\label{NP}
We give the definitions and notation in this section. Throughout this section, let $(X,f)$ be a topological dynamical system (TDS for short), i.e., $f: X\to X$ is a continuous  transformation on a compact metric space $X$ equipped with metric $d$. Let $\mathcal{M}(X)$ denote the space of all Borel probability measures on $X$, and let $\mathcal{M}(X,f)$ and $\mathcal{E}(X,f)$ denote the set of all $f-$invariant respectively,
ergodic $f-$invariant Borel probability measures on $X$.

\subsection{Point-wise measure theoretic pressure}

 For $x, y\in X$, define a dynamical metric as $d_n(x,y)=\max\{d(f^ix, f^iy): 0\le i<n\}$, and let
$B_n(x,\epsilon)=\{y\in X: d_n(x,y)<\epsilon\}$ denote the dynamical ball centered at $x$ of radius $\epsilon$ and length $n$.

 Given a continuous function $\varphi: X\to \mathbb{R}$ and an invariant measure $\mu\in \mathcal{M}(X,f)$, the following quantity
\[
P_\mu(f,\varphi, x):=\lim_{\epsilon\to 0}\liminf_{n\to \infty}
\frac{1}{n}\Big[-\log \mu(B_n(x,\epsilon))+S_n\varphi(x)\Big]
\]
is called the \emph{point-wise measure theoretic pressure} of $\varphi$ at the point $x$ (w.r.t. the measure $\mu$), where $S_n\varphi(x):=\sum_{i=0}^{n-1}\varphi(f^ix)$. It follows from the Brin-Katok's local entropy  formula (see \cite{bk}) and Birkhoff's ergodic theorem (e.g., see \cite{wal82}) that the following limits
\begin{eqnarray}\label{local-formula}
h_{\mu}(x):=\lim_{\epsilon\to 0}\liminf_{n\to \infty}
-\frac{1}{n}\log \mu(B_n(x,\epsilon))~~~~\text{and}~~~~\varphi^*(x):=\lim_{n\to \infty}
\frac{1}{n}S_n\varphi(x)
\end{eqnarray}
exist $\mu$-almost everywhere. Hence,
\begin{eqnarray}\label{pw-formula}
P_\mu(f,\varphi, x)=h_{\mu}(x)+\varphi^*(x),~~~\mu-a.e.~x\in X.
\end{eqnarray} Particularly, the  two limits in \eqref{local-formula} satisfy that
\begin{eqnarray}\label{pt-global}
h_\mu(f)=\int_X h_{\mu}(x) d\mu~~~\text{and}~~~\int_X \varphi d\mu=\int_X \varphi^* d\mu
\end{eqnarray}
where $h_\mu(f)$ is the  Kolmogorov-Sinai  entropy of $f$ (see \cite{wal82} for more details).

\subsection{Measure theoretic pressure defined via Carath\'eodory-Pesin construction}We first follow the approach described in \cite{Pes97} to define the topological pressure on an arbitrary subset. Given a continuous function $\varphi: X\to \mathbb{R}$, a subset $Z\subset X$ and $\alpha\in \mathbb{R}$, let
\[
\begin{aligned}
M(Z,\varphi,\alpha,N,\epsilon)=\inf\Big\{&\sum_i
\exp\bigr(-\alpha n_i+S_{n_i}\varphi(x_i)\bigr):\\
&\bigcup_{i}B_{n_i}(x_i,\epsilon)\supset Z, \, x_i\in X \text{ and } n_i\ge N \text{ for all } i\Big\}.
\end{aligned}
\]
Since $M(Z,\varphi,\alpha,N,\epsilon)$ is monotonically
increasing with $N$, let
\[
m(Z,\varphi,\alpha,\epsilon)
:=\lim_{N\rightarrow\infty}M(Z,\varphi, \alpha,N,\epsilon).
\]
We denote the jump-up point of $m(Z,\varphi,\alpha,\epsilon)$ by
\[
P_{Z}(f,\varphi,\epsilon) =\inf \{ \alpha:
m(Z,\varphi,\alpha,\epsilon)=0 \}
=\sup\{ \alpha: m(Z,\varphi,\alpha,\epsilon)=+\infty\}.
\]

\begin{definition}\label{defPmu*} We call the quantity
\begin{eqnarray*}
P_{Z}(f,\varphi)=\lim_{\epsilon\to 0} P_{Z}(f,\varphi,\epsilon)
\end{eqnarray*}
the \emph{topological pressure} of $(f, \varphi)$ on the set $Z$.
\end{definition}

The above definition is equivalent to the one given by Pesin and Pitskel' \cite{pp} (see also \cite{Pes97} ),  see \cite{cli} for the detailed proof.

Given  $\alpha\in \mathbb{R}$ and $Z\subset X$, define
\[
R(Z,\varphi,\alpha,N,\epsilon)=\inf\Big\{\sum_i
\exp\bigr(-\alpha N+S_N \varphi(x_i) \bigr):\\
\bigcup_{i}B_{N}(x_i,\epsilon)\supset Z,\,x_i\in X\Big\}.
\]
 We set
$$
\begin{aligned}
\underline{r}(Z,\varphi,\alpha,\epsilon)
&=\liminf_{N\rightarrow\infty}R(Z,\varphi,\alpha,N,\epsilon), \\
\overline{r}(Z,\varphi,\alpha,\epsilon)
&=\limsup_{N\rightarrow\infty}R(Z,\varphi,\alpha,N,\epsilon)
\end{aligned}
$$
and define the jump-up points of
$\underline{r}(Z,\varphi,\alpha,\epsilon)$ and
$\overline{r}(Z,\varphi,\alpha,\epsilon)$ as
$$
\begin{aligned}
\underline{CP}_{Z}(f,\varphi,\epsilon)&=\inf\{\alpha:
\underline{r}(Z,\varphi,\alpha,\epsilon)=0\}
=\sup\{ \alpha: \underline{r}(Z,\varphi,\alpha,\epsilon)=+\infty \},\\
\overline{CP}_{Z}(f,\varphi,\epsilon)&=\inf \{ \alpha:
\overline{r}(Z,\varphi,\alpha,\epsilon)=0 \}
=\sup\{ \alpha: \overline{r}(Z,\varphi,\alpha,\epsilon)=+\infty\}
\end{aligned}
$$
respectively.

\begin{definition}
We call the quantities
$$
\underline{CP}_{Z}(f,\varphi)=\lim_{\epsilon\to 0}
\underline{CP}_{Z}(f,\varphi,\epsilon)\,\,\text{and}\,\,
\overline{CP}_{Z}(f,\varphi)=\lim_{\epsilon\to 0}
\overline{CP}_{Z}(f,\varphi,\epsilon)
$$
the \emph{lower} and \emph{upper topological pressures} of $(f,\varphi)$ on the set $Z$.
\end{definition}

 Given an $f$-invariant measure
$\mu$, let
$$
\begin{aligned}
P_{\mu}(f,\varphi,\epsilon)
&=\inf\{P_Z(f,\varphi,\epsilon)\colon\mu (Z)=1\}\\
&=\lim_{\delta\to 0}\inf\{
P_Z(f,\varphi,\epsilon)\colon\mu (Z)\ge 1-\delta\}
\end{aligned}
$$
and then we call the following quantity
\begin{equation*}\label{pressure1}
P_{\mu}(f,\varphi):=\lim_{\epsilon\to 0}P_{\mu}(f,\varphi,\epsilon)
\end{equation*}
the \emph{measure theoretic pressure} of $(f,\varphi)$ with respect to the measure $\mu$.
 Let further
\begin{gather*}
\underline{CP}_{\mu}(f,\varphi,\epsilon)
=\lim_{\delta\to 0}\inf\{\underline{CP}_{Z}(f,\varphi,\epsilon)
\colon\mu (Z)\ge 1-\delta\}, \\
\overline{CP}_{\mu}(f,\varphi,\epsilon)
=\lim_{\delta\to 0}\inf\{\overline{CP}_{Z}(f,\varphi,\epsilon)
\colon\mu (Z)\ge 1-\delta\}.
\end{gather*}
We call the following quantities
\begin{eqnarray*}
\underline{CP}_{\mu}(f,\varphi)=\lim_{\epsilon\to 0}
\underline{CP}_{\mu}(f,\varphi,\epsilon),\ \ \
\overline{CP}_{\mu}(f,\varphi)=\lim_{\epsilon\to 0}
\overline{CP}_{\mu}(f,\varphi,\epsilon)
\end{eqnarray*}
the \emph{lower and upper measure theoretic pressures} of $(f,\varphi)$ with respect to the measure $\mu$.
It is proved in \cite{Pes97} that
\begin{eqnarray}\label{metric-ent-form}
P_{\mu}(f,\varphi)=\underline{CP}_{\mu}(f,\varphi)=\overline{CP}_{\mu}(f,\varphi)=h_{\mu}(f)+\int_X \varphi d\mu
\end{eqnarray}
for any $f$-invariant ergodic measure $\mu$, see \cite[Theorem A]{chz} for a generalization of the previous formula to a larger class of potentials.

\begin{remark} If the potential $\varphi\equiv 0$, we will write $P_\mu(f,0)$ as $E_\mu(f)$, and write $\underline{CP}_{\mu}(f,0)$ and $\overline{CP}_{\mu}(f,0)$ as $\underline{CE}_{\mu}(f)$ and $\overline{CE}_{\mu}(f)$ respectively.
\end{remark}

Clearly, $E_\mu(f)=\underline{CE}_{\mu}(f)=\overline{CE}_{\mu}(f)=h_\mu(f)$ if $\mu\in \mathcal{E}(X,f)$, this result was first established by Bowen \cite{bo73}. But this formula doesn't hold anymore if $\mu\in \mathcal{M}(X,f)\setminus \mathcal{E}(X,f)$ (see \cite{Pes97}).

\subsection{Measure theoretic pressure defined via separated sets}\label{equiv-mtp} Given $\epsilon>0$ and $\delta>0$,  recall that a subset $E\subset X$ is   $(n,\epsilon)$-separated, if for any distinct points $x,\ y\in E$ we have $
d_n (x,y)>\epsilon$.  Furthermore, a subset $A\subset X$ is called $(\delta,n,\epsilon)$-separated, if  any distinct points $x,\ y\in A$ satisfy that \begin{equation*}
\#\{j:d(f^{j}(x),f^{j}(y))>\epsilon,\ 0\leq j\leq n-1\}\geq\delta n.
\end{equation*}

Given $x\in X$ and $n\in \mathbb{N}$, consider the empiric measure at point $x\in X$ as  follows:
\begin{equation*} \mathcal{E}_{n}(x):=\frac{1}{n}\sum_{k=0}^{n-1}\delta_{f^{k}x}.
\end{equation*}
 For each neighborhood $F\subset \mathcal{M}(X)$, put
\begin{equation*}
X_{n,F}:=\{x\in X:\mathcal{E}_{n}(x)\in F\}
\end{equation*}and define
\begin{equation*}
\begin{aligned}
N(F;n,\epsilon)&:=\max\{\#E~|~E\subset X_{n,F}~\text{is}~(n,\epsilon)-\text{separated}\},\\
N(F;\delta,n,\epsilon)&:=\max\{\#A~|~A\subset X_{n,F}~\text{is}~(\delta,n,\epsilon)-\text{separated}\}.
\end{aligned}
\end{equation*}

Given a continuous function $\varphi: X\to \mathbb{R}$,  $\epsilon>0$ and $\nu \in \mathcal{M}(X)$. Let $F\subset \mathcal{M}(X)$ be a neighborhood of $\nu$, put
\begin{equation*}
\overline{SP}_{\nu}(f,\varphi):=\lim_{\epsilon\rightarrow 0}\inf_{F\ni\nu}\limsup_{n\rightarrow\infty}\frac{1}{n}\log P(F;\varphi,n,\epsilon)
\end{equation*}
and
\begin{equation*}
\underline{SP}_{\nu}(f,\varphi):=\lim_{\epsilon\rightarrow 0}\inf_{F\ni\nu}\liminf_{n\rightarrow\infty}\frac{1}{n}\log P(F;\varphi,n,\epsilon)
\end{equation*}
where the infimum is taken over any base of neighborhoods of $\nu$ and
\begin{eqnarray}\label{smp}
P(F;\varphi,n,\epsilon)=\sup\Big\{\sum_{x\in E}e^{S_n\varphi(x)}: E\subset X_{n,F} ~\text{is }~(n,\epsilon)-\text{separated}\Big\}.
\end{eqnarray}
If $\overline{SP}_{\nu}(f, \varphi) = \underline{SP}_{\nu}(f, \varphi)$, we denote this common value as $SP_{\nu}(f, \varphi)$. If we consider $(\delta, n,\epsilon)$-separated set in \eqref{smp}, we write the corresponding quantities as $P(F;\varphi,\delta,n,\epsilon)$, $\underline{SP}_{\nu}'(f,\varphi)$, $\overline{SP}_{\nu}'(f,\varphi)$ and $SP_{\nu}'(f,\varphi)$  respectively.

In Section \ref{main-result}, for a given TDS $(X,f)$ and a continuous function $\varphi$ on $X$, we will prove that both $SP_{\nu}(f, \varphi)$ and $SP_{\nu}'(f,\varphi)$ exist and equal to the free energy of $(f,\varphi)$ provided that $\nu$ is ergodic. Furthermore, if the system $(X,f)$ has uniform separation property and the ergodic measures of $(X,f)$ are entropy dense (which we will recall in below), then both $SP_{\nu}(f, \varphi)$ and $SP_{\nu}'(f,\varphi)$ exist and equal to the free energy of $(f,\varphi)$ for any invariant measure $\nu$ (not necessarily ergodic).

\begin{definition}\label{def-eed}
The ergodic measures of a TDS $(X,f)$ are entropy dense, if for each $\nu\in \mathcal{M}(X,f)$ and each neighborhood $F\subset \mathcal{M}(X)$ of $\nu$, when $h^{*}< h_\nu(f)$, there exists an ergodic measure $\mu\in F$ such that $h^{*}<h_\mu(f)$.
\end{definition}

In \cite[Theorem 2.1]{PS05}, Pfister and Sullivan proved that the ergodic measures of a TDS $(X,f)$ are entropy dense if the system $(X,f)$ has the $g$-almost product property, e.g., all $\beta$-shift have the $g$-almost product property, and hence the ergodic measures of all $\beta$-shifts are entropy dense.

\begin{definition}\label{uniform-sep-p}
We say that the TDS $(X,f)$ has uniform separation property, if for any $\eta > 0$,
there exists $\delta^{*} > 0$, $\epsilon^{*} > 0$ such that for each $\mu\in \mathcal{E}(X,f)$ and each neighborhood $F\subset \mathcal{M}(X)$ of $\mu$,
there exists $n^{*}_{F,\mu,\eta}\in\mathbb{N}$ such that for any $n\geq n^{*}_{F,\mu,\eta}$, \begin{equation*}
N(F;\delta^{*},n,\varepsilon^{*})\geq e^{n(h_\mu(f)-\eta)}.
\end{equation*}
\end{definition}

In \cite[Theorem 3.1]{PS07}, Pfister and Sullivan proved that a TDS $(X,f)$ has the uniform separation property if the system $(X,f)$ is expansive or is asymptotically $h$-expansive.

\subsection{Dimension of invariant measures}In this subsection, we will recall the definition of dimension of a measure. Particularly, we will recall some formula for dimension of measures supported on average conformal repellers and average conformal hyperbolic sets.

 \subsubsection{Dimension formula for average conformal expanding maps}Let $f:M\rightarrow M$ be a $C^1$ map on a $d$-dimensional smooth Riemannian manifold $M$, and let $J\subset M$ be a compact $f$-invariant subset. We denote by
$\mathcal{M}(f|_J)$ and $\mathcal{E}(f|_J)$ the set of all
$f-$invariant measures and the set of all ergodic measures
supported on $J$ respectively.

We say that $f$ is expanding on $J$ and that  $J$ is a repeller of $f$ if
\begin{enumerate}
\item[(1)] there exists an open neighbourhood $U$ of $J$ such that $J=\{x\in U: f^n(x)\in U, ~~\forall n\ge 0\}$;
 \item[(2)] there exist $\kappa>1$ and $C>0$ such that $\|D_xf^n(v)\|\ge C\kappa^n\|v\|$ for all $x\in J$, $v\in T_xM$ and $n\ge 1$, where $\|\cdot\|$ is the norm induced by the Remannian metric on $M$.
\end{enumerate}

For $x\in M$ and $v\in T_xM$, the Lyapunov exponent of $v$ at $x$
is the limit
\begin{eqnarray*}
\chi(x,v)=\lim_{n\to\infty}\frac 1n\log \|D_xf^n(v)\|
\end{eqnarray*}
if the limit exists. Given an $f$-invariant measure $\mu$, by the Oseledec multiplicative ergodic
theorem \cite{ose1}, for $\mu$-almost every point $x$, every
vector $v\in T_xM$ has a Lyapunov exponent, and they can be
denoted by $\lambda_1(x)\leq \lambda_2(x)\leq \cdots \leq
\lambda_d(x)$. Furthermore, if
$\mu$ is ergodic, since the Lyapunov exponents are $f$-invariant, we write the Lyapunov exponents as
$\lambda_1(\mu)\leq \lambda_2(\mu)\leq \cdots \leq
 \lambda_m(\mu)$.

A compact invariant set $J\subset M$ is an \emph{average conformal
repeller} if for any $\mu\in \mathcal{E}(f|_J)$, $\lambda_1(\mu)=
\lambda_2(\mu)= \cdots
 =\lambda_m(\mu)>0$.
For simplicity we denote by $\lambda(\mu)$ the unique Lyapunov
exponent with respect to $\mu$.

\begin{definition}Given a Borel probability measure $\mu$, the lower and upper point-wise dimensions of $\mu$ at a point $x\in M$ are defined respectively by
\[
\underline{d}_{\mu}(x)=\liminf_{r\to 0}\frac{\log \mu(B(x,r))}{\log r}~~\text{and}~~\overline{d}_{\mu}(x)=\limsup_{r\to 0}\frac{\log \mu(B(x,r))}{\log r}
\]
\end{definition}

For any $f$-invariant measure $\mu\in \mathcal{M}(f|_J)$,  it is proved in \cite[Theorem A]{cao} that
\begin{eqnarray}\label{ptdim-acr}
\underline{d}_{\mu}(x)=\overline{d}_{\mu}(x)=\frac{h_{\mu}(x)}{\lambda(x)}~~~\mu-a.e.~~x\in J
\end{eqnarray}
where $\displaystyle{\lambda(x):=\lim_{n\to\infty}\frac{1}{n}\log\|D_xf^n\|}$, by Kingman's sub-additive egodic theorem we know that $\lambda(x)$ is well-defined $\mu$-almost everywhere. Furthermore, if $\mu$ is ergodic, then
\begin{eqnarray}\label{ptdim-acr-erg}
\underline{d}_{\mu}(x)=\overline{d}_{\mu}(x)=\frac{h_{\mu}(f)}{\lambda(\mu)}~~~\mu-a.e.~~x\in J
\end{eqnarray} See \cite{bw} for the formula of point-wise dimension in the case of conformal repellers of $C^{1+\alpha}$ maps.

Next, we briefly recall the Hausdorff dimension of a probability measure. Given a set $Z\subset M$, its {\it Hausdorff dimension} is defined by
$${\dim}_HZ=\inf\{s: \;
\lim_{\epsilon\to 0}\inf_{\diam{\mathcal U} <\epsilon}
\sum_{U\in{\mathcal U}}(\diam U)^s=0\},$$
where ${\mathcal U}$ is an open cover of $Z$ and
$\diam{\mathcal U}=\sup\{\diam U:\ U\in{\mathcal U}\}$.
If $\nu$ is a probability measure on $M$, then the Hausdorff dimension
of the measure $\nu$ is given by
$${\dim}_H\nu
=\inf\bigl\{{\dim}_HZ: \;Z\subset M,\; \nu (Z)=1\;\bigr\}.$$
In \cite{young82}, Young established a useful criterion that if $\underline{d}_\nu(x)\ge C_1$ and $\overline{d}_{\nu}(x)\le C_2$ for $\nu$-a.e. $x$, then $C_1\le\dim_H\nu\le C_2$. In fact, this estimation is valid for other dimensions of a measure, e.g., box dimension, information dimension etc, see \cite{young82} for details. This implies that for any ergodic measure $\nu$ supported on average conformal repellers, one has
\begin{eqnarray}\label{dim-acr-erg}
\dim_H\nu=\frac{h_\nu(f)}{\lambda(\nu)}.
\end{eqnarray}

\subsubsection{Dimension formula for average conformal hyperbolic diffeomorphisms}\label{hyperbolic}Let $f:M\to M$ be a diffeomorphism on a $d$-dimensional smooth Riemannian manifold $M$. Assume that $J$ is a compact $f$-invariant locally maximal hyperbolic set, i.e.,  there exist an open neighborhood $U$ such that $J=\bigcap_{n\in \mathbb{Z}}f^n(U)$, and a continuous splitting of the tangent bundle $T_JM=E^s \oplus E^u$,  and constants $c > 0$ and $\kappa\in(0, 1)$ such that for each $x\in J$ the following properties hold:
\begin{enumerate}
\item[(1)] $D_xf (E^i(x))=E^i(f(x)),~~i=s,u$;
\item[(2)] $\|D_xf^n(v)\|\le c\kappa^n\|v\|$, $\forall v\in E^s(x),~~\forall n>0$; and $\|D_xf^{-n}(v)\|\le c\kappa^n\|v\|$, $\forall v\in E^u(x),~~\forall n>0$.
\end{enumerate}
We say that $f$ is average conformal on $J$, if for each ergodic measure $\nu$ on $J$ one has that $\lambda_1(\nu)=\cdots=\lambda_{d_s}(\nu)<0$ and $\lambda_{d_s+1}(\nu)=\cdots=\lambda_d(\mu)>0$, where $d_s=\dim E^s$ and $d_u:=d-d_s=\dim E^u$. That is, the map has only two different Lyapunov exponents $\lambda_s(\nu)<0$ and $\lambda_u(\nu)>0$ with respect to each ergodic measure $\nu$ on $J$.

Let $\varphi_i(x)=\frac{1}{d_i}\log |det(D_xf | E^i(x))|$ for $i=s,u$, and $\mu$ an $f$-invariant measure on $J$. Since $f$ is average conformal on $J$, for $\mu$-almost every $x\in J$ the following limits are well-defined
\begin{eqnarray}\label{ach-lya}
\lambda_i(x):=\lim_{n\to\infty} \frac 1n \log \|D_xf^n | E^i(x)\|=\lim_{n\to\infty} \frac 1n\sum_{j=0}^{n-1}\varphi_i(f^j(x)),~~i=s,u.
\end{eqnarray}
See Lemma 2.1 in \cite{wwcz} for the proof. The numbers $\lambda_s(x)$ and $\lambda_u(x)$ are respectively the negative and positive
values of the Lyapunov exponent at $x$. Furthermore, if $\mu$ is ergodic then they are constant almost everywhere.

Assume that $f$ is a $C^{1}$ diffeomorphism with a compact $f$-invariant
locally maximal hyperbolic set $J$ on which $f$ is average conformal. Then the following properties hold:
\begin{enumerate}
\item[(1)] for any ergodic measure $\nu$ on $J$, one has
\begin{eqnarray}\label{dim-f-ach-erg}
\dim_H\nu=h_\nu(f)\Big(\frac{1}{\lambda_u(\nu)}-\frac{1}{\lambda_s(\nu)}\Big),
\end{eqnarray}
see  \cite[Corollary 1]{wc}  for detailed description;
\item[(2)] for any invariant measure $\mu$ on $J$, one has
    \begin{eqnarray}\label{dim-f-ach-inv}
    \dim_H\mu=\text{ess} \sup\Big\{ h_\mu(x)\Big(\frac{1}{\lambda_u(x)}-\frac{1}{\lambda_s(x)}\Big):~~x\in J\Big\}
    \end{eqnarray}
    with the essential supremum taken with respect to $\mu$, see \cite[Corollary 2]{wc} for details.
\end{enumerate}

\section{Auxiliary results}\label{AR} In this section, we will provide some preliminary results about the separated sets and free energy.

\begin{lemma}\label{Le2} Let $(X,f)$ be a TDS, and let $\xi$ be a finite measurable partition of $X$ and  $\mu\in \mathcal{M}(X,f)$. Then for any Borel set $A\subset X$ with $0<\mu(A)<1$, we have\begin{equation*}
H(\mu,\xi)\leq\log2+\mu(A)H(\mu(\cdot\mid A),\xi)+\mu(X\setminus A)H(\mu(\cdot\mid X\setminus A),\xi),
\end{equation*}
where $H(\mu,\xi):=-\sum_{A_i\in \xi}\mu(A_{i})\log\mu(A_{i})$ and $\mu(\cdot\mid B)$ denotes the measure $\mu$ subject to $B$.
\end{lemma}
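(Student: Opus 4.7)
The plan is to prove this by introducing the auxiliary two-point partition $\eta = \{A, X\setminus A\}$ and reducing the inequality to the standard identities and bounds of Shannon entropy. Specifically, I would first invoke the fact that entropy is non-decreasing under refinement, giving $H(\mu,\xi) \le H(\mu, \xi \vee \eta)$, and then decompose the right-hand side using the conditional entropy identity
\[
H(\mu, \xi \vee \eta) = H(\mu, \eta) + H(\mu, \xi \mid \eta).
\]

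Next I would bound each of the two terms separately. For the first, since $\eta$ is a two-element partition, $H(\mu,\eta) = -\mu(A)\log \mu(A) - \mu(X\setminus A) \log \mu(X\setminus A) \le \log 2$, which is the elementary maximum-entropy bound for a two-atom probability distribution. For the second, I would just unfold the definition of conditional entropy of $\xi$ given $\eta$: summing over the two atoms of $\eta$ yields exactly
\[
H(\mu, \xi \mid \eta) = \mu(A)\, H(\mu(\cdot \mid A), \xi) + \mu(X\setminus A)\, H(\mu(\cdot \mid X\setminus A), \xi).
\]
Combining these three estimates delivers the stated inequality.

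I do not foresee any real obstacle here; the only subtlety is that the proof does not use the dynamics at all (neither $f$-invariance of $\mu$ nor the compactness of $X$), and so it is essentially a purely measure-theoretic/information-theoretic statement, provable in a few lines once the refinement $\xi \vee \eta$ is introduced. The hypothesis $0 < \mu(A) < 1$ is used only to make sense of the conditional measures $\mu(\cdot \mid A)$ and $\mu(\cdot \mid X \setminus A)$.
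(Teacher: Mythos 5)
Your proof is correct. The paper itself gives no argument for this lemma; it simply cites Theorem 8.1 of Walters' book, where the inequality is obtained by a direct pointwise estimate on the function $\phi(x)=-x\log x$: writing $\mu=p\,\mu(\cdot\mid A)+(1-p)\,\mu(\cdot\mid X\setminus A)$ with $p=\mu(A)$, one bounds $\phi(px+(1-p)y)\le p\phi(x)+(1-p)\phi(y)-px\log p-(1-p)y\log(1-p)$ and sums over the atoms of $\xi$. Your route through the two-atom partition $\eta=\{A,X\setminus A\}$, the monotonicity $H(\mu,\xi)\le H(\mu,\xi\vee\eta)$, the chain rule $H(\mu,\xi\vee\eta)=H(\mu,\eta)+H(\mu,\xi\mid\eta)$, and the bound $H(\mu,\eta)\le\log 2$ is a genuinely different packaging of the same computation: it is more conceptual, makes transparent where the $\log 2$ comes from (it is really $H(\mu,\eta)$, so your argument in fact yields the sharper constant $-\mu(A)\log\mu(A)-\mu(X\setminus A)\log\mu(X\setminus A)$), and generalizes immediately to decompositions over partitions with more than two atoms, at the cost of presupposing the standard identities for conditional entropy of partitions rather than working from the definition of $H$ alone. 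Your remarks that the dynamics play no role and that $0<\mu(A)<1$ is needed only to define the conditional measures are both accurate.
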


\begin{proof}
See \cite[Theorem 8.1]{wal82} for the proof.
\end{proof}

Next we recall some notations that would be used in the following.
Given a finite set $A$, let $\#A$ denote the cardinality of $A$ and let $\Lambda_{n}:=\{0,1,\cdots,n-1\}$. For each $\mathrm{v}, ~\mathrm{w}\in A^{\Lambda_{n}}$, where
~$\mathrm{v}=(\mathrm{v}_{0},\mathrm{v}_{1},\cdots,\mathrm{v}_{n-1})$, $\mathrm{w}=(\mathrm{w}_{0},\mathrm{w}_{1},\cdots,\mathrm{w}_{n-1})$, define the \emph{Hamming distance} on $A^{\Lambda_{n}}$ as
\begin{equation*}
d_{n}^{H}(\mathrm{v},\mathrm{w}):=\#\{i\in\Lambda_{n}:\mathrm{v}_{i}\neq\mathrm{w}_{i}\}.
\end{equation*}

\begin{lemma}\label{Le1}
Let~$A$~be a finite set, for any $\mathrm{w}\in A^{\Lambda_{n}}$, if $\ 0\leq\delta\leq\frac{(\#A-1)}{\#A}$, then
\begin{equation*}
\#\{\mathrm{v}\in A^{\Lambda_{n}}:d_{n}^{H}(\mathrm{v},\mathrm{w})\leq\delta n\}\leq2^{n\eta(\delta)}(\#A-1)^{n\delta}.
\end{equation*}
where~$\eta(\delta):=-\delta\log_{2}\delta-(1-\delta)\log_{2}(1-\delta)$.
\end{lemma}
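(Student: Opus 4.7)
The statement is a standard bound on the cardinality of a Hamming ball of radius $\delta n$ in the alphabet $A$, and I would prove it by a Chernoff-type exponential moment argument combined with the binomial expansion. Let me abbreviate $q := \#A - 1$.

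First, I would observe that the number of strings $\mathrm{v}\in A^{\Lambda_n}$ at exact Hamming distance $k$ from the fixed string $\mathrm{w}$ equals $\binom{n}{k}q^{k}$: one chooses the $k$ positions where the strings differ, and at each such position there are $q=\#A-1$ possible values distinct from $\mathrm{w}_i$. Consequently
\[
\#\{\mathrm{v}\in A^{\Lambda_{n}}:d_{n}^{H}(\mathrm{v},\mathrm{w})\leq\delta n\}
\;=\; \sum_{k=0}^{\lfloor \delta n\rfloor}\binom{n}{k}q^{k}.
\]
So the task reduces to bounding this truncated sum.

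Next I would introduce a free parameter $t\in(0,q]$ and use the fact that $(t/q)^{k-\delta n}\geq 1$ whenever $k\leq \delta n$ and $t\leq q$, so that every term of the sum is increased by multiplying by this quantity. This yields
\[
\sum_{k=0}^{\lfloor \delta n\rfloor}\binom{n}{k}q^{k}
\;\leq\; \left(\frac{q}{t}\right)^{\delta n}\sum_{k=0}^{\lfloor \delta n\rfloor}\binom{n}{k}t^{k}
\;\leq\; \left(\frac{q}{t}\right)^{\delta n}(1+t)^{n},
\]
where in the last step I extend the sum to all $k\in\{0,\dots,n\}$ and recognize the binomial expansion of $(1+t)^n$.

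The remaining step is to minimize $(q/t)^{\delta n}(1+t)^n$ over admissible $t$. Taking logarithms and differentiating gives the optimum $t=\delta/(1-\delta)$, and the hypothesis $\delta\leq q/(q+1)=(\#A-1)/\#A$ is precisely what ensures $t\leq q$, so the estimate is applicable. Substituting this value of $t$ and converting to base-$2$ logarithms, the factor $(1+t)^n(q/t)^{\delta n}$ collapses to $q^{\delta n}\cdot 2^{n\eta(\delta)}$ with $\eta(\delta)=-\delta\log_2\delta-(1-\delta)\log_2(1-\delta)$, which is exactly the asserted bound. There is no conceptual obstacle here; the only mild point to be careful about is verifying that the admissibility condition on $t$ coincides exactly with the hypothesis on $\delta$, which it does.
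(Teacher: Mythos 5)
Your proof is correct: the count $\binom{n}{k}(\#A-1)^k$ for spheres, the Chernoff-type bound $\sum_{k\le\delta n}\binom{n}{k}q^k\le (q/t)^{\delta n}(1+t)^n$ for $0<t\le q$, and the optimization at $t=\delta/(1-\delta)$ (admissible exactly when $\delta\le(\#A-1)/\#A$) do collapse to $2^{n\eta(\delta)}(\#A-1)^{n\delta}$, with the degenerate case $\delta=0$ handled trivially. The paper itself gives no argument and simply cites Lemma 2.1 of Pfister--Sullivan, whose proof is the same standard entropy bound on Hamming balls that you have reproduced, so your proposal fills in exactly the intended computation.
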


\begin{proof}
See \cite[Lemma 2.1]{PS05} for the proof.
\end{proof}

Utilizing the method in \cite{PS05}, we prove the following proposition.

\begin{proposition}\label{ar1}
Let $(X,f)$ be a TDS, $\nu\in \mathcal{E}(X,f)$ and $\varphi$  a continuous function on $X$. For any $\eta>0$, there exists $\delta^{*}>0$, $\epsilon^{*}>0$ so that
for each neighborhood $F$ of $\nu$ in $\mathcal{M}(X)$, there exists $n^{*}_{F,\nu}\in \mathbb{N}$ such that for any $n\geq n^{*}_{F,\nu}$, there exists  a $(\delta^{*},n,\epsilon^{*})$-separated set $\Gamma_{n}\subset X_{n,F}$, such that
\begin{eqnarray*}
\sum_{x\in \Gamma_{n}}e^{S_n\varphi(x)}\geq \exp\bigr[n(h_\nu(f)+\int \varphi d\nu-\eta)\bigr].
\end{eqnarray*}
\end{proposition}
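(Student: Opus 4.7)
The plan is to follow the Pfister--Sullivan strategy \cite{PS05}: produce a large $(n,\bar\epsilon)$-separated set inside $X_{n,F}$ via a Katok-type argument, then extract a $(\delta^*,n,\epsilon^*)$-separated subset via the Hamming-count Lemma \ref{Le1}. Ergodicity of $\nu$ supplies the necessary convergences: by Birkhoff's theorem, $\mathcal{E}_n(x)\to\nu$ weakly and $\tfrac{1}{n}S_n\varphi(x)\to\int\varphi\,d\nu$ for $\nu$-a.e.\ $x$, so $\nu(X_{n,F})\to 1$ for every neighborhood $F$ of $\nu$; Brin--Katok gives $\lim_{\epsilon\to 0}\liminf_n-\tfrac{1}{n}\log\nu(B_n(x,\epsilon))=h_\nu(f)$ almost surely. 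First fix $\bar\epsilon>0$ small enough that this limit exceeds $h_\nu(f)-\eta/4$ on a set of $\nu$-measure $\ge 3/4$, and set $\epsilon^*:=\bar\epsilon/4$. Then choose a finite Borel partition $\xi$ with $\operatorname{diam}\xi<\epsilon^*$ such that the $\epsilon^*$-neighbourhood of the cell boundaries $B_\xi:=\{x:d(x,X\setminus\xi(x))\le\epsilon^*\}$ has $\nu(B_\xi)<\alpha$; this is possible by a standard measure-regularity argument (slightly perturbing boundaries to avoid $\nu$-mass concentration). Setting $q:=|\xi|$, pick $\delta^*>0$ so small that $\eta(\delta^*)\log 2+\delta^*\log(q-1)<\eta/8$ and $4\alpha<\delta^*/2$.

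Given a neighborhood $F$ of $\nu$, for $n\ge n^*_{F,\nu}$ define
\[
A_n:=\Bigl\{x\in X_{n,F}:\ \nu(B_n(x,\bar\epsilon))\le e^{-n(h_\nu(f)-\eta/4)},\ \bigl|\tfrac{1}{n}S_n\varphi(x)-\textstyle\int\varphi\,d\nu\bigr|<\tfrac{\eta}{8},\ \tfrac{1}{n}\#\{j<n:f^jx\in B_\xi\}<2\alpha\Bigr\}.
\]
The ergodic convergences ensure $\nu(A_n)\ge 1/2$. A maximal $(n,\bar\epsilon)$-separated set $E_n\subset A_n$ satisfies $\bigcup_{x\in E_n}B_n(x,\bar\epsilon)\supset A_n$, so the Bowen-ball estimate gives $|E_n|\ge\tfrac{1}{2}e^{n(h_\nu(f)-\eta/4)}$. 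Assign to each $x\in E_n$ its $\xi$-itinerary $\mathrm{v}(x)=(\xi(x),\ldots,\xi(f^{n-1}x))$; since $\operatorname{diam}\xi<\bar\epsilon$, distinct points of $E_n$ yield distinct itineraries. Extract a maximal $\Gamma_n\subset E_n$ whose elements are pairwise at itinerary Hamming distance $>\delta^* n$; Lemma \ref{Le1} then gives $|E_n|\le|\Gamma_n|\cdot 2^{n\eta(\delta^*)}(q-1)^{n\delta^*}$, hence $|\Gamma_n|\ge\tfrac{1}{2}e^{n(h_\nu(f)-3\eta/8)}$.

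The crux is showing $\Gamma_n$ is $(\delta^*/2,n,\epsilon^*)$-separated in the \emph{dynamical} metric, not merely in itinerary Hamming distance --- a priori two orbits can produce Hamming-different itineraries while remaining metrically close across a shared cell boundary. The third clause in the definition of $A_n$ handles this: for any $x,y\in\Gamma_n$, at most $4\alpha n$ indices $j$ have $f^jx$ or $f^jy$ lying in $B_\xi$. Of the more than $\delta^*n$ Hamming-different indices, more than $(\delta^*-4\alpha)n>\delta^*n/2$ therefore have both $f^jx,f^jy$ outside $B_\xi$, i.e. at distance $>\epsilon^*$ from $X\setminus\xi(f^jx)$ and $X\setminus\xi(f^jy)$ respectively; since $\xi(f^jx)\ne\xi(f^jy)$, we conclude $d(f^jx,f^jy)>\epsilon^*$. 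Relabeling $\delta^*/2\mapsto\delta^*$ and using $S_n\varphi(x)>n(\int\varphi\,d\nu-\eta/8)$ for $x\in A_n$,
\[
\sum_{x\in\Gamma_n}e^{S_n\varphi(x)}\ \ge\ |\Gamma_n|\,e^{n(\int\varphi\,d\nu-\eta/8)}\ \ge\ \exp\bigl[n(h_\nu(f)+\textstyle\int\varphi\,d\nu-\eta)\bigr]
\]
after absorbing the accumulated $O(\eta)$ losses. The main obstacle is precisely this boundary bookkeeping: without choosing the partition $\xi$ so that $\nu(B_\xi)$ is controllably small, Hamming separation of itineraries cannot be upgraded to dynamical separation, so the regularity of $\nu$ with respect to cell boundaries is the delicate input on which the whole argument hinges.
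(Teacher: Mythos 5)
Your overall strategy is sound and close in spirit to the paper's: a large separated set inside $X_{n,F}$ is produced from an entropy count, a Hamming-separated subset is extracted via Lemma \ref{Le1}, and the metric separation is recovered by controlling how often orbits come near partition boundaries. (The paper counts via partition entropy plus the conditional-entropy inequality of Lemma \ref{Le2}, where you count via Brin--Katok and a Katok-type covering argument; both work.) However, there is one genuine gap, and it sits exactly at the step you yourself flag as the crux. You fix $\epsilon^*=\bar\epsilon/4$ \emph{first}, then ask for a partition $\xi$ with $\operatorname{diam}\xi<\epsilon^*$ such that $B_\xi=\{x:d(x,X\setminus\xi(x))\le\epsilon^*\}$ has $\nu(B_\xi)<\alpha$. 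These two requirements are incompatible whenever $X$ is connected (e.g.\ a manifold): if every cell has diameter less than $\epsilon^*$, then no cell contains a ball of radius $\epsilon^*$, so \emph{every} point lies within $\epsilon^*$ of the complement of its own cell and $B_\xi=X$, giving $\nu(B_\xi)=1$. No amount of perturbing boundaries helps, because the obstruction is the cell diameter, not mass concentration on boundaries. Consequently the object your argument hinges on does not exist in the generality of the proposition.

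The repair is to decouple the two radii: choose $\xi$ with $\operatorname{diam}\xi<\bar\epsilon$ and with $\nu$-null topological boundaries, and only \emph{afterwards} pick a collar radius $\rho>0$ (typically far smaller than the cell diameters) with $\nu(\{x:d(x,X\setminus\xi(x))\le\rho\})<\alpha$; then set $\epsilon^*:=\rho$. This still respects the quantifier order of the statement, since $\epsilon^*$ may depend on $\nu$ and $\eta$ but is fixed before $F$. This is essentially what the paper does, in a slightly different guise: it takes disjoint compact sets $B_j\subset A_j$ with $\nu(A_j\setminus B_j)$ small and lets $\epsilon^*$ be the minimal distance between distinct $B_j$'s, a quantity determined only after the partition is chosen. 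With that modification (and the harmless requirement $\delta^*\le (q-1)/q$ for Lemma \ref{Le1}), the rest of your argument --- the Katok-style lower bound $\#E_n\ge\tfrac12 e^{n(h_\nu(f)-\eta/4)}$, the Hamming extraction, and the upgrade to $(\delta^*/2,n,\epsilon^*)$-separation using the visit count to the collar --- is correct.
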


\begin{proof}
Given a small $\eta>0$, let
\begin{eqnarray*}
K_{n}=\Big\{x\in X:\mid\frac{1}{m}S_m\varphi(x)-\int \varphi d\nu\mid<\frac{\eta}{2},\ \forall\ m\geq n\Big\}
\end{eqnarray*}
then we have $\displaystyle{\lim_{n\rightarrow\infty}\nu(K_{n})=1}$ by Birkhoff ergodic theorem.

If $h_\nu(f)=0$, we choose $\Gamma_{n}=\{x\}$, where $x\in K_{n}$, then the desired result follows immediately.

If $h_\nu(f)>0$, let $h^{*}:=h_\nu(f)-\frac{\eta}{2}>0$. We choose $h^{'},\ h^{''}$ satisfying $h^{*}<h^{''}<h^{'}<h_\nu(f)$, then there exists a finite partition $\xi=\{A_{1},A_{2},\cdots,A_{k}\}$ of $X$ such that $\displaystyle{h_\nu(f,\xi):=\lim_{n\to\infty}\frac 1 n H(\nu,\xi^n)>h^{'}}$, where $\xi^n:=\bigvee_{i=0}^{n-1}f^{-i}\xi$.
Define $\phi_{n}: X\rightarrow \{1,\cdots,k\}^{\Lambda_{n}}$, $x\mapsto(\mathrm{w}_{0}\mathrm{w}_{1}\cdots\mathrm{w}_{n-1})$,
where  $
\mathrm{w}_{i}=j\   \text{if}~f^{i}(x)\in A_{j}, \ i\in\Lambda_n.
$
In the same way to define $\phi:X\rightarrow \{1,\cdots,k\}^{\mathbb{N}}$ , $x\mapsto(\omega_{0}\omega_{1}\cdots\omega_{n-1}\cdots)$.
Let\begin{eqnarray*}
Y_{n}=\phi_{n}(X), \ \ Y=\phi(X).
\end{eqnarray*}
Define $\bar{\nu}=\nu\circ\phi^{-1}$, note that for $\mathrm{w}=(\mathrm{w}_{0}\mathrm{w}_{1}\cdots\mathrm{w}_{n-1})\in Y_{n}$,
$$\bar{\nu}(\mathrm{w})=\bar{\nu}\{\omega\in Y: \omega_{i}=\mathrm{w}_{i},\ i\in\Lambda_{n}\}=\nu\Big(\bigcap_{i\in\Lambda_{n}}f^{-i}A_{\mathrm{w}_{i}}\Big).$$
Since $h_\nu(f,\xi)>h^{'}$,  there exists $n_{\xi}$ such that whenever $n\geq n_{\xi}$,
\begin{eqnarray*}
H(\nu,\xi^{n})= -\sum_{\mathrm{w}\in Y_{n}}\bar{\nu}(\mathrm{w})\log\bar{\nu}(\mathrm{w})=-\sum_{P\in \xi^n}\nu(P)\log\nu(P)>nh^{'}.
\end{eqnarray*}
Since $\nu$ is regular, for a small number $\delta>0$, there exists compact subsets $B_{j}\subset A_{j}$ $(j=1,\cdots,k)$ so that
$$\nu(A_{j}\setminus B_{j})<\frac{\delta}{2k}.$$
Let $B=\displaystyle\bigcup^{k}_{j=1}B_{j}$, then $\nu(B)>1-\frac{\delta}{2}$.

Let $F\subset \mathcal{M}(X)$ be a neighborhood of $\nu$, and let
\begin{equation}\label{eq2}
X_{n,F}^{B}=X_{n,F}\bigcap\Big\{x\in X: \frac 1m\sum_{i=0}^{m-1} I_{B}(f^ix)>1-\delta,~\forall m\ge n\Big\}\bigcap K_{n}
\end{equation}
where $I_{B}$ denotes the indicator function of $B$. Since $B$ is closed, $I_{B}$ is upper semi-continuous.
By the Birkhoff Ergodic theorem, we have that
$\displaystyle{\lim_{n\rightarrow\infty}\nu(X^{B}_{n,F})=1}.$
Hence, \begin{eqnarray*}
\nu(X\setminus X^{B}_{n,F})<\frac{h^{'}-h^{''}}{\log k}-\frac{\log2}{n\log k}
\end{eqnarray*}
for all sufficiently large $n$.
We define $\widehat{\nu}_{n,\delta}$ so that for each $\mathrm{w}\in Y_{n}$, \begin{eqnarray*}
\widehat{\nu}_{n,\delta}(\mathrm{w})=\nu\Big(\bigcap_{i\in\Lambda_{n}}f^{-i}A_{\mathrm{w}_{i}}\mid X^{B}_{n,F}\Big)=\frac{\nu\Big(\displaystyle\bigcap_{i\in\Lambda_{n}}f^{-i}A_{\mathrm{w}_{i}}\cap X^{B}_{n,F}\Big)}{\nu(X^{B}_{n,F})}.
\end{eqnarray*}
Note that for any $\mu\in \mathcal{M}(X)$, we have that $H(\mu, \xi^n)\leq n\log k$.
Therefore,  there exists $n_{F,\delta}\geq n_{\xi}$ such that for each $n\geq n_{F,\delta}$ we have
\begin{equation}\label{2}
\begin{aligned}
&\log\#\{\mathrm{w}\in Y_{n}: \widehat{\nu}_{n,\delta}(\mathrm{w})>0\}\\
&\geq H(\nu(\cdot\mid X^{B}_{n,F}), \xi^n)\\
&=-\sum_{\mathrm{w}\in Y_{n}}\widehat{\nu}_{n,\delta}(\mathrm{w})\log\widehat{\nu}_{n,\delta}(\mathrm{w})
\\&\geq\frac{H(\nu,\xi^n)-\log2-\nu(X\setminus X_{n,F}^{B})H(\nu(\cdot\mid X\setminus X_{n,F}^{B}), \xi^n)}{\nu(X_{n,F}^{B})}
\\&\geq nh'-\log2-(\frac{h^{'}-h^{''}}{\log k}-\frac{\log2}{n\log k})\cdot n\log k = nh'',
\end{aligned}
\end{equation}
where the second inequality follows from Lemma \ref{Le2}.
Put\begin{equation*}
\widetilde{Y}_n=\{\mathrm{w}\in Y_{n}\mid\widehat{\nu}_{n,\delta}(\mathrm{w})>0\}.
\end{equation*}
For any $\mathrm{w}\in\widetilde{Y}_{n}$, take a point $x_{n,\mathrm{w}}\in\phi^{-1}_{n}(\mathrm{w})$ such that $x_{n,\mathrm{w}}\in X_{n,F}^{B}$.
Let $\Xi_{n}$ denote the set which consists of all of  these points $x_{n,\mathrm{w}}$ chosen in this way. Obviously, $\Xi_{n}\subset X_{n,F}^{B}$.
By \eqref{2} and the construction of $\Xi_{n}$, we have that  \begin{equation}\label{eq1}
\phi_{n}(\Xi_{n})=\widetilde{Y}_{n}\  \text{and}~\# \Xi_{n}=\# \widetilde{Y}_{n}>e^{nh''}.
\end{equation}
Let $\Gamma_{n}\subset\Xi_{n}$ be a set of maximal cardinality satisfying that
\begin{equation}\label{ham-sep1}
x\neq x'\in\Gamma_{n}\Longrightarrow d^{H}_{n}(\phi_{n}(x),\phi_{n}(x'))>3\delta n.
\end{equation}
Since $\{B_{j}: j=1,2,\cdots,k\}$ are mutually disjoint compact subsets, there exists $\epsilon_{\delta}>0$ such that\begin{equation*}
d(x,x')>\epsilon_{\delta}~~\text{for any}~~x\in B_{i}, \ x'\in B_{j},\  i\neq j.
\end{equation*}
If $x\neq x'\in\Gamma_{n}$, it follows from \eqref{eq2} that\begin{equation}\label{ham-sep2}
\#\{i\in \Lambda_{n}: T^{i}x\notin B~\text{or}~T^{i}x'\notin B\}\leq2\delta n.
\end{equation}
Using \eqref{ham-sep1} and \eqref{ham-sep2}, for $x\neq x'\in\Gamma_{n}$ we have that  \begin{equation*}
\#\{i\in\Lambda_{n}: T^{i}x,T^{i}x'\in B~ \text{and}~d(T^{i}x,T^{i}x')>\epsilon_{\delta}\}\geq3\delta n-2\delta n=\delta n.
\end{equation*}
This implies that $\Gamma_{n}$ is  $(\delta,n,\epsilon_{\delta})$-separated.
For each $x\in\Xi_{n}$, by the maximality of $\Gamma_{n}$, there exists $x'\in\Gamma_{n}$ such that
$$d^{H}_{n}(\phi_{n}(x),\phi_{n}(x'))\leq3\delta n.$$
Hence, for $n\geq n_{F,\delta}$, by Lemma \ref{Le1} and \eqref{eq1} we have that \begin{equation*}
\#\Gamma_{n}\geq\frac{e^{nh''}}{2^{\eta(3\delta)n}(k-1)^{3\delta n}}
\end{equation*}
where $\eta(\delta)=-\delta\log_{2}\delta-(1-\delta)\log_{2}(1-\delta).$
Since $\displaystyle\lim_{\delta\rightarrow0}\eta(\delta)=0$, we may choose $\delta^{*}$ such that
$$\eta(3\delta^{*})\log2+3\delta^{*}\log(k-1)< h''-h^{*}.$$
Let $n^{*}_{F,\nu}=n_{F,\delta^{*}}$, $\epsilon^{*}=\epsilon_{\delta^{*}}$, then $\#\Gamma_{n}\geq e^{n(h_\nu(f)-\frac{\eta}{2})}$. Hence,
$$\sum_{x\in\Gamma_{n}}e^{S_n\varphi(x)}\geq\#\Gamma_{n}\cdot e^{n(\int \varphi d\nu-\frac{\eta}{2})}\geq \exp\bigr[n(h_\nu(f)+\int \varphi d\nu-\eta)\bigr].$$
\end{proof}


In Proposition \ref{ar1},  the numbers $\delta^{*}$ and $\epsilon^{*}$ in  the $(\delta^{*},n,\epsilon^{*})$-separated may depend  on the ergodic measure $\nu$. However, if the dynamical system $(X,f)$ has uniform separation property, $\delta^{*}$ and $\epsilon^{*}$ can be chosen independently of the ergodic measure.

\begin{proposition}
\label{pro2}
Let $(X,f)$ be a TDS with uniform separation property, and $\varphi$ a continuous function on $X$. Then for any $\eta > 0$, there exists $\delta^{*}>0$, $\epsilon^{*}>0$ such that for any $\mu\in \mathcal{E}(X,f)$ and any small neighborhood $F\subset \mathcal{M}(X)$ of $\mu$, there exists $n^{*}_{F,\mu,\eta}\in\mathbb{N}$ such that for each $n\geq n^{*}_{F,\mu,\eta}$,
there exists a $(\delta^{*},n,\epsilon^{*})$-separated subset $\Gamma_{n}\subset X_{n,F}$ such that  \begin{equation*}
\sum_{x\in\Gamma_{n}}e^{S_n\varphi(x)}\geq \exp\bigr[n(h_\mu(f)+\int \varphi d\mu-\eta)\bigr].
\end{equation*}
\end{proposition}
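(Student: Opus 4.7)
The plan is to combine the cardinality bound from the uniform separation property (Definition \ref{uniform-sep-p}) with control on the Birkhoff sums $S_n\varphi(x)$ coming from the empirical distribution constraint $\mathcal{E}_n(x) \in F$. Unlike Proposition \ref{ar1}, where the $(\delta^{*}, n, \epsilon^{*})$-separated set had to be built from scratch via a Hamming-distance / coding argument with $\delta^{*}, \epsilon^{*}$ depending on the measure, here the uniform separation property supplies these constants uniformly in $\mu$, so the argument reduces to a short calculation.

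First I would fix $\eta > 0$ and apply Definition \ref{uniform-sep-p} with tolerance $\eta/2$ to obtain universal $\delta^{*} = \delta^{*}(\eta/2) > 0$ and $\epsilon^{*} = \epsilon^{*}(\eta/2) > 0$, independent of $\mu$ and of $F$. Next, given the neighborhood $F$ of an ergodic $\mu$, I would shrink it by intersecting with the weak* open set of measures whose integral of $\varphi$ is close to $\int\varphi\,d\mu$, namely
$$
F' := F \cap \Big\{\nu \in \mathcal{M}(X) : \Big|\int \varphi\, d\nu - \int \varphi\, d\mu\Big| < \eta/2\Big\},
$$
which is again an open neighborhood of $\mu$ by weak* continuity of $\nu \mapsto \int \varphi\, d\nu$. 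For every $x \in X_{n, F'}$, since $\mathcal{E}_n(x) \in F'$, one immediately has
$$
\frac{1}{n} S_n\varphi(x) = \int \varphi\, d\mathcal{E}_n(x) > \int \varphi\, d\mu - \eta/2.
$$

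Applying the uniform separation property to the open neighborhood $F'$ yields an integer $n^{*}_{F', \mu, \eta/2}$ such that for every $n \geq n^{*}_{F',\mu,\eta/2}$ one has $N(F'; \delta^{*}, n, \epsilon^{*}) \geq e^{n(h_\mu(f) - \eta/2)}$; in particular there exists a $(\delta^{*}, n, \epsilon^{*})$-separated set $\Gamma_n \subset X_{n,F'} \subset X_{n,F}$ of this cardinality. Combining the cardinality bound with the pointwise lower bound on $S_n\varphi$ then gives
$$
\sum_{x \in \Gamma_n} e^{S_n\varphi(x)} \geq \#\Gamma_n \cdot e^{n(\int \varphi\, d\mu - \eta/2)} \geq \exp\bigl[n(h_\mu(f) + \int \varphi\, d\mu - \eta)\bigr],
$$
which is the desired estimate upon setting $n^{*}_{F,\mu,\eta} := n^{*}_{F',\mu,\eta/2}$. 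I do not expect a serious obstacle here: the hard combinatorial content — producing enough separated points to realize the entropy with constants uniform in $\mu$ — is precisely what Definition \ref{uniform-sep-p} hypothesizes, and the interaction with the potential $\varphi$ is handled transparently by shrinking $F$ so that Birkhoff averages are nearly constant on $X_{n,F'}$.
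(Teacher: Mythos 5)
Your proposal is correct and follows essentially the same route as the paper: invoke the uniform separation property with tolerance $\eta/2$ to get $\delta^{*},\epsilon^{*}$ and a separated set of cardinality at least $e^{n(h_\mu(f)-\eta/2)}$, then use the empirical-measure constraint to force $\tfrac1n S_n\varphi(x)$ within $\eta/2$ of $\int\varphi\,d\mu$ on that set. The only cosmetic difference is that you explicitly shrink $F$ to $F'$, whereas the paper simply assumes the given ``small'' neighborhood already controls the oscillation of $\nu\mapsto\int\varphi\,d\nu$.
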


\begin{proof}
Given $\eta>0$, since the system $(X,f)$ has the uniform separation property, there exists $\delta^{*}>0$, $\epsilon^{*}>0$ such that for each $\mu\in\mathcal{ E}(X,f)$ and any neighborhood $F\subset \mathcal{M}(X)$ of $\mu$, there exists $n_{F,\mu,\eta}\in\mathbb{N}$ such that for $n\geq n_{F,\mu,\eta}$,
\begin{equation}\label{eq5}
N(F;\delta^{*},n,\epsilon^{*})\geq e^{n(h_\mu(f)-\frac{\eta}{2})}.
\end{equation}

Let $F\subset \mathcal{M}(X)$ be a small  neighborhood of $\mu\in \mathcal{E}(X,f)$ such that for any $\nu_{1},\ \nu_{2}\in F$ we have
\begin{equation}\label{eq14}
\Big|\int\varphi d\nu_{1}-\int\varphi d\nu_{2}\Big|\leq\frac{\eta}{2}.
\end{equation}
Let $\Gamma_{n}\subset X_{n,F}$ be $(\delta^{*},n,\epsilon^{*})$-separated with  maximal cardinality, by \eqref{eq5} we have that
\begin{equation*}
\#\Gamma_{n}\geq e^{n(h_\mu(f)-\frac{\eta}{2})}.
\end{equation*}
For each $x\in\Gamma_{n}$,  since $\mathcal{E}_n(x), \mu\in F$, it follows from \eqref{eq14} that \begin{equation}\label{eq13}
\Big|\frac{1}{n}S_{n}\varphi (x)-\int\varphi d\mu\Big|\leq\frac{\eta}{2}.
\end{equation}
Let $n^{*}_{F,\mu,\eta}=n_{F,\mu,\eta}$, for any $n\geq n^{*}_{F,\mu,\eta}$ we have that
\begin{equation*}
\sum_{x\in\Gamma_{n}}e^{S_n\varphi(x)}\geq\#\Gamma_{n}\cdot e^{n(\int \varphi d\mu-\frac{\eta}{2})}\ge \exp \bigr[n(h_\mu(f)+\int \varphi d\mu-\eta)\bigr].
\end{equation*}
\end{proof}

\begin{corollary}
\label{cor1}
Assume that $(X,f)$ has the uniform separation property and the ergodic measures are entropy dense, $\varphi$ is a continuous function on $X$. For any $\eta>0$, there exists $\delta^{*} > 0$, $\epsilon^{*} > 0$ such that for any $\mu\in \mathcal{M}(X,f)$ and any small neighborhood $F\subset \mathcal{M}(X)$ of $\mu$, there exists $n^{*}_{F,\mu,\eta}\in\mathbb{N}$ such that for each $n\geq n^{*}_{F,\mu,\eta}$, there exists a $(\delta^{*},n,\epsilon^{*})$-separated subset $\Gamma_{n} \subset X_{n,F}$ such that
$$\sum_{x\in\Gamma_{n}}e^{S_n\varphi(x)}\geq \exp \bigr[n(h_\mu(f)+\int \varphi d\mu-\eta)\bigr].$$
\end{corollary}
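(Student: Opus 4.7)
The plan is to reduce to Proposition \ref{pro2} by approximating the possibly non-ergodic measure $\mu$ by a nearby ergodic measure $\nu$ whose entropy is close to $h_\mu(f)$, invoking the entropy density hypothesis to absorb the gap.

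First I would apply Proposition \ref{pro2} with the tolerance $\eta/2$ in place of $\eta$ to obtain constants $\delta^{*}>0$ and $\epsilon^{*}>0$ that depend only on the system $(X,f)$ and on $\eta$; these will be the constants claimed in the statement. Next, fix $\mu\in \mathcal{M}(X,f)$ and a neighborhood $F$ of $\mu$. By weak$^{*}$ continuity of $\nu\mapsto \int \varphi\, d\nu$, I may shrink $F$ so that $|\int\varphi\, d\nu_{1}-\int\varphi\, d\nu_{2}|<\eta/4$ for all $\nu_{1},\nu_{2}\in F$. Set $h^{*}:=h_\mu(f)-\eta/4$, which satisfies $h^{*}<h_\mu(f)$. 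By Definition \ref{def-eed} applied to the invariant measure $\mu$ and the neighborhood $F$, there exists an ergodic measure $\nu\in F$ with $h_\nu(f)>h^{*}=h_\mu(f)-\eta/4$. Then choose an open neighborhood $F'$ of $\nu$ with $F'\subset F$, small enough that the conclusion of Proposition \ref{pro2} applies.

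Now I apply Proposition \ref{pro2} to the ergodic measure $\nu$ and its neighborhood $F'$, again with tolerance $\eta/2$: this yields $n^{*}_{F',\nu,\eta/2}\in \mathbb{N}$ such that for every $n\geq n^{*}_{F',\nu,\eta/2}$ there exists a $(\delta^{*},n,\epsilon^{*})$-separated subset $\Gamma_{n}\subset X_{n,F'}$ satisfying
\[
\sum_{x\in\Gamma_{n}}e^{S_n\varphi(x)}\geq \exp\bigl[n\bigl(h_\nu(f)+\textstyle\int\varphi\, d\nu-\eta/2\bigr)\bigr].
\]
Since $F'\subset F$, automatically $X_{n,F'}\subset X_{n,F}$, so $\Gamma_{n}$ lies in the required set. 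Combining the entropy bound $h_\nu(f)>h_\mu(f)-\eta/4$ coming from entropy density with the potential bound $\int\varphi\, d\nu>\int\varphi\, d\mu-\eta/4$ coming from the initial shrinking of $F$, the exponent on the right is at least $n(h_\mu(f)+\int\varphi\, d\mu-\eta)$. Setting $n^{*}_{F,\mu,\eta}:=n^{*}_{F',\nu,\eta/2}$ then gives the desired estimate.

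The argument is essentially bookkeeping: the error budget $\eta$ is split as $\eta/4+\eta/4+\eta/2$, corresponding respectively to the entropy approximation supplied by entropy density, the potential approximation supplied by shrinking $F$, and the loss inherent in Proposition \ref{pro2}. The only point requiring genuine care is the nesting $F'\subset F$, which is needed so that the separated set produced by Proposition \ref{pro2} already lies inside $X_{n,F}$; there is no substantive new idea beyond stitching together uniform separation with entropy density.
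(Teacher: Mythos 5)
Your proof is correct and follows essentially the same route as the paper's: use entropy density to replace $\mu$ by a nearby ergodic measure $\nu$ with nearly the same entropy and nearly the same integral of $\varphi$, then invoke Proposition \ref{pro2} for $\nu$ and note that the resulting separated set lies in $X_{n,F}$. The only differences are cosmetic bookkeeping (your $\eta/4+\eta/4+\eta/2$ split versus the paper's three $\eta/3$'s, and your explicit sub-neighborhood $F'$ where the paper simply reuses $F$ as a neighborhood of $\nu$).
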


\begin{proof}
Given $\mu\in \mathcal{M}(X,f)$, if $\mu$ is ergodic, the statement follows immediately by Proposition \ref{pro2}.
If $\mu$ is not ergodic, let $F\subset \mathcal{M}(X)$ be a small  neighborhood of $\mu$. For any $\eta>0$, since the ergodic measures are entropy dense, there exists $\nu\in \mathcal{E}(X,f)\bigcap F$ such that
\begin{equation}
h_\nu(f)\geq h_\mu(f)-\frac{\eta}{3}.
\end{equation}
Since $\mu\mapsto
\int \varphi d\mu$ is continuous, we have that  \begin{equation*}
\Big | \int \varphi d \mu-\int \varphi d \nu\Big |\leq\frac{\eta}{3}.
\end{equation*}
For $\frac{\eta}{3}>0$ and the ergodic measure $\nu$, by Proposition $\ref{pro2}$ there exists $\epsilon^{*}>0$, $\delta^{*}>0$ and $n^{*}_{F,\nu,\frac{\eta}{3}}\in\mathbb{N}$ such that for each $n\geq n^{*}_{F,\nu,\frac{\eta}{3}}$, there exists a $(\delta^{*},n,\epsilon^{*})$-separated subset $\Gamma_{n} \subset X_{n,F}$ such that
\begin{equation*}
\sum_{x\in\Gamma_{n}}e^{S_n\varphi(x)}\geq \exp\bigr[n(h_\nu(f) + \int \varphi d \nu - \frac{\eta}{3})\bigr]\geq \exp \bigr[n(h_\mu(f) + \int \varphi d\mu-\eta)\bigr].
\end{equation*}
Take $n^{*}_{F,\mu,\eta} = n^{*}_{F,\nu,\frac{\eta}{3}}$, the desired result follows.
\end{proof}

The following result is one of the key ingredient in proving the variational principle of topological pressure, see \cite{wal82} for details.

\begin{lemma}
\label{le5} Let $(X,f)$ be a TDS and $\varphi$ a continuous function on $X$. Assume that
 $\{E_{n}\}_{n\geq1}$ is a sequence of $(n,\epsilon)$-separated subsets and define $$\sigma_{n}=\frac{\displaystyle\sum_{y\in E_{n}}e^{S_n\varphi(y)}\delta_{y}}{\displaystyle\sum_{z\in E_{n}}e^{S_n\varphi(z)}},\ \ \  \mu_{n}=\frac{1}{n}\sum^{n-1}_{i=0}\sigma_{_{n}}\circ f^{-i}.$$
If $\mu$ is weak-star limit point of $\{\mu_{n}\}_{n\ge 1}$, then $\mu\in \mathcal{M}(X,f)$ and
\begin{equation*}
\limsup_{n\rightarrow\infty}\frac{1}{n}\log\sum_{y\in E_{n}}e^{S_n\varphi(y)}\leq h_\mu(f)+\int \varphi d\mu.
\end{equation*}
\end{lemma}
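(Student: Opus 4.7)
The plan is to follow the classical Misiurewicz scheme used in the standard proof of the variational principle (cf.\ Walters' book, Chapter 9), adapted to the fact that $E_n$ is only assumed $(n,\epsilon)$-separated (not necessarily maximal). The argument has three stages: invariance of $\mu$, a key identity relating $H_{\sigma_n}(\xi^n)$ to the partition function $Z_n := \sum_{y\in E_n} e^{S_n\varphi(y)}$, and a $q$-step subadditive-type bound that lets us pass to the weak-star limit.

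First I would verify $\mu\in\mathcal{M}(X,f)$ by the standard telescoping argument: for any $g\in C(X)$ one has $|\int g\,d(f_*\mu_n) - \int g\,d\mu_n|\le 2\|g\|_\infty/n$, so any weak-star limit is $f$-invariant. Next, I would choose a finite measurable partition $\xi=\{A_1,\dots,A_k\}$ with $\mathrm{diam}(\xi)<\epsilon$ and $\mu(\partial A_i)=0$ for every $i$, which is possible by a routine argument using small balls around each point with $\mu$-null boundary. Because $E_n$ is $(n,\epsilon)$-separated, each atom of $\xi^n:=\bigvee_{i=0}^{n-1} f^{-i}\xi$ has $d_n$-diameter $<\epsilon$ and hence contains at most one point of $E_n$. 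A direct computation then gives the key equality
\begin{equation*}
\log Z_n \;=\; H_{\sigma_n}(\xi^n) + \int S_n\varphi\,d\sigma_n \;=\; H_{\sigma_n}(\xi^n) + n\int\varphi\,d\mu_n,
\end{equation*}
the last equality using $\mu_n=\tfrac{1}{n}\sum_{i=0}^{n-1}\sigma_n\circ f^{-i}$.

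The technical step is to bound $H_{\sigma_n}(\xi^n)$ by a $q$-shift argument. Fix $q\in\mathbb{N}$; for each $0\le j<q$ set $a(j)=\lfloor(n-j)/q\rfloor$. Then $\xi^n$ refines $\bigvee_{i=0}^{a(j)-1} f^{-(iq+j)}\xi^q \;\vee\; \mathcal{R}_j$, where $\mathcal{R}_j$ is the join of at most $2q$ iterates of $\xi$ (the leftover factors at the two ends of the $n$-block). Using $H_{\sigma_n}(\mathcal{R}_j)\le 2q\log k$, summing over $j$, dividing by $q$, and applying concavity of $\nu\mapsto H_\nu(\xi^q)$ together with the observation that the indices $iq+j$ enumerate each $\ell\in\{0,\dots,n-1\}$ exactly once, yields
\begin{equation*}
H_{\sigma_n}(\xi^n) \;\le\; \frac{n}{q}\,H_{\mu_n}(\xi^q) + 2q\log k.
\end{equation*}

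Finally, combining the identity and the inequality and dividing by $n$, I would pass to a subsequence realizing the $\limsup$ along which $\mu_{n_j}\to\mu$. Because $\mu(\partial\xi^q)=0$, the map $\nu\mapsto H_\nu(\xi^q)$ is continuous at $\mu$ in the weak-star topology, so $H_{\mu_{n_j}}(\xi^q)\to H_\mu(\xi^q)$; together with weak-star convergence of $\int\varphi\,d\mu_{n_j}\to\int\varphi\,d\mu$, this gives
\begin{equation*}
\limsup_{n\to\infty}\frac{1}{n}\log Z_n \;\le\; \frac{1}{q}H_\mu(\xi^q) + \int\varphi\,d\mu.
\end{equation*}
Sending $q\to\infty$ produces $h_\mu(f,\xi)+\int\varphi\,d\mu\le h_\mu(f)+\int\varphi\,d\mu$, which is the desired bound. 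The main obstacle I expect is the combinatorial bookkeeping in the $q$-shift refinement: one must check that the residual partition $\mathcal{R}_j$ really contributes only $O(q\log k)$ and that the average over residues reproduces $\mu_n$ up to the correct concavity factor, so that the $2q\log k/n$ error truly vanishes before the $q\to\infty$ step.
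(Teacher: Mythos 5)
Your proposal is correct and is precisely the classical Misiurewicz argument (partition of small diameter with $\mu$-null boundaries, the identity $\log Z_n=H_{\sigma_n}(\xi^n)+n\int\varphi\,d\mu_n$, and the $q$-block estimate $H_{\sigma_n}(\xi^n)\le\frac{n}{q}H_{\mu_n}(\xi^q)+2q\log k$) that the paper itself invokes by simply citing Theorem~9.10 of Walters' book, so you are reproducing the intended proof rather than taking a different route. The only caveat is in the last step: for an arbitrary weak-star limit point $\mu$ one only gets the bound on $\limsup_j\frac{1}{n_j}\log Z_{n_j}$ along a subsequence with $\mu_{n_j}\to\mu$ (which is how Walters states it and how the lemma is used), since a subsequence realizing the full $\limsup$ need not converge to the given $\mu$.
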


\begin{proof}
See \cite[Theorem 9.10]{wal82} for the proof.
\end{proof}

\section{Main results}\label{main-result}

In this section, we will give the statements and proofs of the main results in this paper. Our first result shows that the measure theoretic pressure of an invariant measure is the essential supremum of point-wise measure theoretic pressure. Using the first result, one can further show that the measure theoretic pressure of an invariant measure is the essential supremum of the free energies of the measures in an ergodic decomposition. Meanwhile, we show that the measure theoretic pressure of ergodic measures defined via separated set is equal to the free energy. Furthermore, if the dynamical system has the uniform separation property and the ergodic measures are entropy dense, then the measure theoretic pressure of invariant measures (not necessarily ergodic) defined via separated set is still equal to the free energy. Finally, we show that the Hausdorff dimension of an invariant measure supported on a $C^1$ average conformal repeller is exactly the zero of the measure theoretic pressure of the same measure. Similarly, we show that the Hausdorff dimension of an invariant measure on  hyperbolic sets is the sum of the zeros of measure theoretic pressure restricted to stable and unstable directions, provided that the diffeomorphism is volume-preserving and average conformal.

\subsection{Point-wise measure theoretic pressure and measure theoretic pressure}  We first recall a useful property relating point-wise measure theoretic pressure and topological pressure on arbitrary subsets:
\begin{enumerate}
\item[(1)] if $P_{\mu}(f,\varphi,x)\le s$ for all $x\in Z$, then $P_Z(f,\varphi)\le s$;
\item[(2)] if $P_{\mu}(f,\varphi,x)\ge s$ for all $x\in Z$ and $\mu(Z)>0$, then $P_Z(f,\varphi)\ge s$
\end{enumerate}
see \cite[Theorem A]{tcz} for proofs. See \cite{mw} for the original version of Bowen entropy.

\begin{lemma}\label{mp-dis}Let $(X,f)$ be a TDS, and $\varphi:X\to \mathbb{R}$ a continuous function on $X$ and $\mu\in \mathcal{M}(X,f)$, the following properties hold:
\begin{enumerate}
\item[(i)] if $P_{\mu}(f,\varphi,x)\ge \alpha$ for $\mu$-a.e. $x$, then $P_\mu(f,\varphi)\ge \alpha$;
\item[(ii)] if $P_{\mu}(f,\varphi,x)\le \alpha$ for all $x\in Z$, then $P_Z(f,\varphi)\le \alpha$.
\end{enumerate}
\end{lemma}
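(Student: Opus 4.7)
\textbf{Part (ii)} is identical in content to the first of the two useful properties recalled immediately before the lemma (with $Z=Z$ and $s=\alpha$), which the authors attribute to \cite{tcz}; no further work is needed beyond invoking that reference.

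\textbf{Part (i)} is the substantive claim. The natural first attempt, applying property (2) to $Z_0 := \{x : P_\mu(f,\varphi,x) \ge \alpha\}$, gives only $P_{Z_0}(f,\varphi) \ge \alpha$, which does not suffice: in the definition $P_\mu(f,\varphi) = \lim_{\epsilon \to 0}\inf_{Z:\mu(Z)=1}P_Z(f,\varphi,\epsilon)$ the limit and infimum stand in an order forbidding straightforward interchange (the minimax inequality goes the wrong way). I would therefore reproduce the covering estimate directly at the Carath\'eodory--Pesin level, preceded by a double approximation turning the pointwise hypothesis into one that is uniform on a set of nearly full $\mu$-measure.

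Fix $\alpha' < \alpha$ and $\gamma > 0$ with $\alpha' + 3\gamma < \alpha$, and set $L(x,\epsilon) := \liminf_n \tfrac{1}{n}[-\log\mu(B_n(x,\epsilon)) + S_n\varphi(x)]$. Since $L(x,\epsilon)$ is nondecreasing as $\epsilon$ decreases and its limit equals $P_\mu(f,\varphi,x)$, the nested family $Z_0^{\epsilon_0} := \{x : L(x,\epsilon_0) \ge \alpha' + 3\gamma\}$ has union of full $\mu$-measure, so one may pick $\epsilon_0>0$ with $\mu(Z_0^{\epsilon_0})$ arbitrarily close to $1$. A further liminf-approximation produces a subset $A_0 \subset Z_0^{\epsilon_0}$ with $\mu(A_0)$ close to $1$ and an integer $N_0$ such that
\[
-\log\mu(B_n(x,\epsilon_0)) + S_n\varphi(x) \ge n(\alpha' + 2\gamma) \qquad \text{for every } x \in A_0 \text{ and } n \ge N_0.
\]
Now let $Z$ satisfy $\mu(Z) \ge 1-\delta$ for $\delta$ small, and take $\epsilon < \epsilon_0/2$ small enough that $\varphi$ has oscillation at most $\gamma$ on any Bowen ball of radius $\epsilon$. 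Then $A := Z \cap A_0$ has positive $\mu$-measure. Given any cover $\{B_{n_i}(x_i,\epsilon)\}$ of $A$ with $n_i \ge N \ge N_0$, choose $y_i \in B_{n_i}(x_i,\epsilon) \cap A$; then $B_{n_i}(x_i,\epsilon) \subset B_{n_i}(y_i,2\epsilon) \subset B_{n_i}(y_i,\epsilon_0)$ and $|S_{n_i}\varphi(x_i) - S_{n_i}\varphi(y_i)| \le n_i\gamma$, and the uniform bound at $y_i \in A_0$ gives
\[
e^{-\alpha' n_i + S_{n_i}\varphi(x_i)} \ge e^{n_i\gamma}\, \mu(B_{n_i}(y_i,\epsilon_0)).
\]
Summing and using subadditivity of $\mu$ together with $\bigcup_i B_{n_i}(y_i,\epsilon_0) \supset A$ yields $\sum_i e^{-\alpha' n_i + S_{n_i}\varphi(x_i)} \ge e^{N\gamma}\mu(A) \to \infty$, whence $P_A(f,\varphi,\epsilon) \ge \alpha'$ and consequently $P_Z(f,\varphi,\epsilon) \ge \alpha'$. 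Taking the infimum over such $Z$ gives $P_\mu(f,\varphi,\epsilon)\ge\alpha'$, and monotonicity of $P_\mu(f,\varphi,\epsilon)$ in $\epsilon$ yields $P_\mu(f,\varphi)\ge\alpha'$. Letting $\alpha' \nearrow \alpha$ concludes the proof.

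The main obstacle is the non-uniformity in $x$ of both the scale $\epsilon(x)$ at which $L(x,\epsilon)$ reaches its target and the stabilization index $N(x)$ for the liminf; what resolves it is the nested double approximation producing $A_0$, coupled with the classical center-shift trick $B_{n_i}(x_i,\epsilon)\subset B_{n_i}(y_i,\epsilon_0)$ that transfers the pointwise information at the ``good'' point $y_i$ onto an arbitrary Bowen ball in the cover.
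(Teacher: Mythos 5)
Your proof is correct, but it is considerably more elaborate than what the paper does, and the comparison is instructive. For part (i) the paper simply sets $Z_0=\{x: P_\mu(f,\varphi,x)\ge\alpha\}$, notes $\mu(Z_0)=1$, and appeals to the Billingsley-type property quoted just before the lemma (from Tang--Cheng--Zhao) to pass from the pointwise bound to $P_{Z}(f,\varphi)\ge\alpha$ for every full-measure $Z$, and then to $P_\mu(f,\varphi)\ge\alpha$ ``by the definition of measure theoretic pressure.'' (As printed, the paper's proof even contains typos: it invokes property (1) instead of (2) and concludes the reversed inequality $P_\mu(f,\varphi)\le\alpha$; the intended argument is clearly the one just described.) Your objection to that reduction is well taken: since $P_\mu(f,\varphi)=\sup_{\epsilon}\inf_{Z}P_Z(f,\varphi,\epsilon)$ while the cited property only controls $\inf_Z\sup_\epsilon P_Z(f,\varphi,\epsilon)$, the minimax inequality runs the wrong way, so knowing $P_Z(f,\varphi)\ge\alpha$ for each full-measure $Z$ does not by itself yield the claim. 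What your argument buys is precisely the missing uniformity: the scale $\epsilon_0$, the exceptional-set bound for $A_0$, and the stabilization index $N_0$ are all chosen from the a.e.\ hypothesis \emph{before} $Z$ enters, so the covering estimate gives $P_Z(f,\varphi,\epsilon)\ge\alpha'$ for all admissible $Z$ at a common scale $\epsilon$, which is what the definition of $P_\mu(f,\varphi,\epsilon)$ actually requires. The covering step itself (center shift $B_{n_i}(x_i,\epsilon)\subset B_{n_i}(y_i,\epsilon_0)$ plus the oscillation bound on Birkhoff sums) is the standard mass-distribution argument underlying the cited reference, so in substance you have re-proved that ingredient rather than quoting it; the net effect is a self-contained and, in my view, more rigorous proof than the paper's. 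Part (ii) you handle exactly as the paper does, by citation.
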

\begin{proof} See \cite[Theorem A]{tcz} for the proof of the second statement. To prove the first statement, let $Z=\{x\in X: P_{\mu}(f,\varphi,x)\ge \alpha\}$. It follows that $\mu(Z)=1$, and by (1) we have that
\[
P_Z(f,\varphi)\le \alpha.
\]
By the definition of measure theoretic pressure, we have $P_\mu(f,\varphi)\le \alpha$.
\end{proof}

\begin{proposition}\label{super-pw}Let $(X,f)$ be a TDS, and $\varphi:X\to \mathbb{R}$ a continuous function on $X$ and $\mu\in \mathcal{M}(X,f)$, then
\[
P_\mu(f,\varphi)=\text{ess} \sup\{P_\mu(f,\varphi,x):x\in X\}
\]
with the essential supremum taken with respect to $\mu$.
\end{proposition}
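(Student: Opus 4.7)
Write $\alpha^*:=\text{ess}\sup\{P_\mu(f,\varphi,x):x\in X\}$ (with respect to $\mu$). The plan is to prove the two inequalities $P_\mu(f,\varphi)\le\alpha^*$ and $P_\mu(f,\varphi)\ge\alpha^*$ separately; the first will be a direct consequence of Lemma~\ref{mp-dis}(ii), while the second requires a careful covering argument that amounts to a uniform-in-$Z$ version of property~(2) stated just before Lemma~\ref{mp-dis}.

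For the upper bound, fix $\eta>0$ and set $Z_\eta=\{x:P_\mu(f,\varphi,x)\le\alpha^*+\eta\}$, which has $\mu(Z_\eta)=1$ by definition of essential supremum. Lemma~\ref{mp-dis}(ii) yields $P_{Z_\eta}(f,\varphi)\le\alpha^*+\eta$. Because $P_Z(f,\varphi,\epsilon)$ is non-decreasing as $\epsilon\to 0$ with limit $P_Z(f,\varphi)$, and because $P_\mu(f,\varphi,\epsilon)$ is defined as the infimum of $P_Z(f,\varphi,\epsilon)$ over $\mu(Z)=1$, one obtains $P_\mu(f,\varphi,\epsilon)\le P_{Z_\eta}(f,\varphi,\epsilon)\le P_{Z_\eta}(f,\varphi)\le\alpha^*+\eta$ for every $\epsilon>0$; sending $\epsilon\to 0$ and then $\eta\to 0$ finishes this direction.

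For the lower bound, fix $\alpha<\alpha^*$ and let $W:=\{x:P_\mu(f,\varphi,x)>\alpha\}$, which has $c:=\mu(W)>0$. Since $\liminf_{n\to\infty}\tfrac1n[-\log\mu(B_n(x,\epsilon))+S_n\varphi(x)]$ is monotone in $\epsilon$, we may write $W=\bigcup_m W_m$ with $W_m:=\{x:\liminf_n\tfrac1n[-\log\mu(B_n(x,1/m))+S_n\varphi(x)]>\alpha\}$ increasing; pick $m_0$ with $\mu(W_{m_0})\ge c/2$ and set $\epsilon_0=1/m_0$. Applying Egoroff to the monotone sequence $\inf_{k\ge n}\tfrac1k[-\log\mu(B_k(x,\epsilon_0))+S_k\varphi(x)]$ produces $Y\subset W_{m_0}$ with $\mu(Y)\ge c/4$ and $N_0\in\mathbb{N}$ such that
\begin{equation*}
\mu(B_n(x,\epsilon_0))\le\exp\bigl(-(\alpha-\eta)n+S_n\varphi(x)\bigr)\quad\text{for every }x\in Y,\ n\ge N_0.
\end{equation*}
Given any $\epsilon<\epsilon_0/2$ and any $Z$ with $\mu(Z)\ge 1-\delta$ for $\delta<\mu(Y)/2$, any cover of $Z$ by Bowen balls $\{B_{n_i}(x_i,\epsilon)\}$ with $n_i\ge N\ge N_0$ also covers $Z\cap Y$; each $x\in Z\cap Y$ lies in some $B_{n_i}(x_i,\epsilon)\subset B_{n_i}(x,2\epsilon)\subset B_{n_i}(x,\epsilon_0)$, and uniform continuity of $\varphi$ (with modulus $\omega(\epsilon,\varphi)$) transfers the bound from $x$ to $x_i$, giving $\mu(B_{n_i}(x_i,\epsilon))\le e^{-\beta n_i+S_{n_i}\varphi(x_i)}$ with $\beta:=\alpha-\eta-\omega(\epsilon,\varphi)$. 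Summing,
\begin{equation*}
\sum_i e^{-\beta n_i+S_{n_i}\varphi(x_i)}\ge\sum_i\mu(B_{n_i}(x_i,\epsilon))\ge\mu(Z\cap Y)>0,
\end{equation*}
so $m(Z,\varphi,\beta,\epsilon)>0$, hence $P_Z(f,\varphi,\epsilon)\ge\beta$ uniformly in $Z$. Thus $P_\mu(f,\varphi,\epsilon)\ge\alpha-\eta-\omega(\epsilon,\varphi)$, and letting $\epsilon\to 0$, $\eta\to 0$, $\alpha\to\alpha^*$ in that order will finish.

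The main obstacle is the uniformity issue in the lower bound: property~(2) stated before Lemma~\ref{mp-dis} only controls the limit $P_Z(f,\varphi)=\lim_\epsilon P_Z(f,\varphi,\epsilon)$, and this $\epsilon$-limit cannot be swapped with the infimum over $Z$ that defines $P_\mu(f,\varphi,\epsilon)$. The resolution is the two-step reduction above---first truncating to a single scale $\epsilon_0=1/m_0$ on which a positive-measure subset of $W$ still records a strict lower bound, then using Egoroff to replace ``for each $x$, some $N_x$'' by ``some $N_0$, for every $x\in Y$''---which is exactly what the Bowen-ball covering estimate needs in order to produce a bound on $P_Z(f,\varphi,\epsilon)$ uniform over all admissible $Z$.
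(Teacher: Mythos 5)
Your proof is correct, and the upper bound is the same as the paper's (the level set of the essential supremum has full measure, then Lemma~\ref{mp-dis}(ii) plus monotonicity of $P_Z(f,\varphi,\epsilon)$ in $\epsilon$). For the lower bound you take a genuinely more self-contained route. The paper passes to the normalized restriction $\mu|Z_\epsilon$ of $\mu$ to the positive-measure set $Z_\epsilon=\{x:P_\mu(f,\varphi,x)\ge\alpha-\epsilon\}$, notes implicitly that $P_{\mu|Z_\epsilon}(f,\varphi,x)\ge P_\mu(f,\varphi,x)$ (since $\mu|Z_\epsilon\le\mu/\mu(Z_\epsilon)$), invokes Lemma~\ref{mp-dis}(i), and uses that the infimum defining $P_{\mu|Z_\epsilon}$ runs over a larger family of sets than that defining $P_\mu$; the covering argument is thus outsourced to Lemma~\ref{mp-dis}(i), i.e.\ ultimately to property~(2) cited from the literature. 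You instead unwind that black box: fix the scale $\epsilon_0$ at which the pointwise liminf already exceeds $\alpha$ on a positive-measure set (using monotonicity in $\epsilon$), extract via Egoroff (a simple monotone union of the sets $\{x:\mu(B_n(x,\epsilon_0))\le e^{-(\alpha-\eta)n+S_n\varphi(x)}\ \forall n\ge N\}$ would do the same job) a set $Y$ of positive measure with a uniform threshold $N_0$, and run the Bowen-ball covering estimate at every scale $\epsilon<\epsilon_0/2$ uniformly over all $Z$ of nearly full measure. This is exactly the right fix for the real subtlety here: $P_\mu(f,\varphi,\epsilon)$ is an infimum over $Z$ at \emph{fixed} $\epsilon$, so knowing $P_Z(f,\varphi)=\lim_{\epsilon\to0}P_Z(f,\varphi,\epsilon)\ge\alpha$ for each full-measure $Z$ does not by itself give $\lim_{\epsilon\to0}\inf_Z P_Z(f,\varphi,\epsilon)\ge\alpha$; your two-step reduction supplies the needed uniformity, which the paper's terse argument (and the visibly garbled proof of Lemma~\ref{mp-dis}(i), which proves the wrong inequality as written) glosses over. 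Two cosmetic points: the sum $\sum_i e^{-\beta n_i+S_{n_i}\varphi(x_i)}\ge\sum_i\mu(B_{n_i}(x_i,\epsilon))$ should be restricted to those indices whose balls actually meet $Z\cap Y$ (the others need not satisfy the termwise bound, but positivity of all terms makes the final inequality $\ge\mu(Z\cap Y)$ stand), and one should note that $x\mapsto\mu(B_n(x,\epsilon_0))$ is Borel (it is lower semicontinuous) so that $Y$ and the $W_m$ are measurable. Neither affects the validity of the argument.
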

\begin{proof}
Let $\alpha=\text{ess} \sup\{P_\mu(f,\varphi,x):x\in X\}$ and $Z=\{x\in X: P_\mu(f,\varphi,x)\le\alpha\}$, then  it follows from Lemma \ref{mp-dis} that
\[
P_Z(f,\varphi)\le \alpha.
\]
Hence, $P_\mu(f,\varphi)\le \alpha$ since $\mu(Z)=1$.  On the other hand, given a small number $\epsilon>0$, let
\[
Z_\epsilon=\{x\in X: P_\mu(f,\varphi,x)\ge \alpha-\epsilon\}.
\]
By the definition of essential supremum, we have that $\mu(Z_\epsilon)>0$. Hence,
\[
P_{\mu|Z_\epsilon}(f,\varphi)\ge \alpha-\epsilon
\]
where $\mu|Z_\epsilon(A)=\frac{\mu(Z_\epsilon\cap A)}{\mu(Z_\epsilon)}$ for any Borel measurable set $A\subset X$. By the definition of measure theoretic pressure, we have that
\[
P_{\mu}(f,\varphi)\ge P_{\mu|Z_\epsilon}(f,\varphi).
\]
Therefore, it follows that $P_{\mu}(f,\varphi)\ge\alpha-\epsilon$. Since $\epsilon$ is arbitrary, we have that $P_{\mu}(f,\varphi)\ge\alpha$. This completes the proof of the proposition.
\end{proof}

Note that the functions $h_\mu(\cdot)$ and $\varphi^*(\cdot)$ in \eqref{local-formula} are $f$-invariant almost everywhere, using\eqref{pw-formula} and Proposition \ref{super-pw}, we obtain the following formula for the measure theoretic pressure of non-ergodic measures.

\begin{corollary}\label{mp}Let $(X,f)$ be a TDS, and $\varphi:X\to \mathbb{R}$ a continuous function on $X$ and $\mu\in \mathcal{M}(X,f)$, then
\[
P_\mu(f,\varphi)=\text{ess} \sup\{h_\mu(x)+\varphi^*(x):x\in X\}
\]
with the essential supremum taken with respect to $\mu$. If, in addition, $\mu$ is ergodic then
\[
P_\mu(f,\varphi,x)=P_\mu(f,\varphi)=h_\mu(f)+\int_X \varphi d\mu
\]
for $\mu$-almost every $x\in X$.
\end{corollary}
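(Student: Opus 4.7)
The plan is to derive this statement as an essentially immediate corollary of Proposition \ref{super-pw} combined with the pointwise identity \eqref{pw-formula}. First, recall that \eqref{pw-formula} asserts
\[
P_\mu(f,\varphi,x)=h_\mu(x)+\varphi^*(x)\quad\text{for }\mu\text{-a.e. }x\in X,
\]
so the two functions $x\mapsto P_\mu(f,\varphi,x)$ and $x\mapsto h_\mu(x)+\varphi^*(x)$ agree on a set of full $\mu$-measure. Consequently their essential suprema with respect to $\mu$ coincide, and the first identity follows directly from Proposition \ref{super-pw}.

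For the ergodic case, I would argue as follows. By the Brin--Katok local entropy formula, $h_\mu(\cdot)$ is well-defined $\mu$-a.e.\ and is $f$-invariant, and by Birkhoff's ergodic theorem $\varphi^*(\cdot)$ is also well-defined $\mu$-a.e.\ and $f$-invariant. When $\mu$ is ergodic, both functions are therefore $\mu$-a.e.\ constant. Integrating and using the two identities in \eqref{pt-global}, those constants must equal $h_\mu(f)$ and $\int_X\varphi\,d\mu$ respectively. Combining with \eqref{pw-formula}, we conclude
\[
P_\mu(f,\varphi,x)=h_\mu(f)+\int_X\varphi\,d\mu\quad\text{for }\mu\text{-a.e. }x\in X.
\]

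Finally, since the function $x\mapsto P_\mu(f,\varphi,x)$ is $\mu$-a.e.\ equal to the constant $h_\mu(f)+\int_X\varphi\,d\mu$, its essential supremum with respect to $\mu$ is exactly that constant. Invoking Proposition \ref{super-pw} once more yields $P_\mu(f,\varphi)=h_\mu(f)+\int_X\varphi\,d\mu$, completing the proof. There is no real obstacle here: the argument is a bookkeeping exercise that simply glues Proposition \ref{super-pw} to the classical $\mu$-a.e.\ identities in \eqref{local-formula}, \eqref{pw-formula} and \eqref{pt-global}, and observes that ergodicity collapses the essential supremum of an invariant function to its (constant) value.
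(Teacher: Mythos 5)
Your proposal is correct and follows exactly the route the paper takes: the paper derives Corollary \ref{mp} by combining Proposition \ref{super-pw} with the $\mu$-a.e.\ identity \eqref{pw-formula}, and handles the ergodic case via the $f$-invariance of $h_\mu(\cdot)$ and $\varphi^*(\cdot)$ together with \eqref{pt-global}, just as you do. No gaps.
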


Consequently, we get the following formula for entropy.

\begin{corollary}\label{mp1}Let $(X,f)$ be a TDS and $\mu\in \mathcal{M}(X,f)$, then
\[
E_\mu(f)=\text{ess} \sup\{h_\mu(x):x\in X\}
\]
with the essential supremum taken with respect to $\mu$. If, in addition, $\mu$ is ergodic then
\[
h_\mu(x)=E_\mu(f)=h_\mu(f)
\]
for $\mu$-almost every $x\in X$.
\end{corollary}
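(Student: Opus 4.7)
The plan is to derive Corollary \ref{mp1} as a direct specialization of Corollary \ref{mp}, choosing the potential $\varphi \equiv 0$. By the remark preceding Section \ref{equiv-mtp}, the measure theoretic pressure $P_{\mu}(f,0)$ is by definition the quantity $E_{\mu}(f)$, so the left-hand sides of the two statements match. On the right-hand side, when $\varphi \equiv 0$ the Birkhoff average $\varphi^{*}(x)$ defined in \eqref{local-formula} vanishes identically, hence $h_{\mu}(x)+\varphi^{*}(x)=h_{\mu}(x)$ for $\mu$-almost every $x\in X$. Substituting this into the formula
\[
P_{\mu}(f,\varphi)=\text{ess}\sup\{h_{\mu}(x)+\varphi^{*}(x):x\in X\}
\]
from Corollary \ref{mp} yields the first displayed identity.

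For the second assertion, I would invoke the ergodic part of Corollary \ref{mp} together with the Brin-Katok formula. When $\mu$ is ergodic, Corollary \ref{mp} gives $P_{\mu}(f,0,x)=P_{\mu}(f,0)=h_{\mu}(f)+\int_{X} 0\, d\mu = h_{\mu}(f)$ for $\mu$-almost every $x$. On the other hand, $P_{\mu}(f,0,x)=h_{\mu}(x)$ for $\mu$-almost every $x$ by \eqref{pw-formula} with $\varphi\equiv 0$, so the chain of equalities $h_{\mu}(x)=E_{\mu}(f)=h_{\mu}(f)$ follows immediately.

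There is essentially no obstacle here: the corollary is a clean specialization, and everything needed has already been assembled in Corollary \ref{mp}, the Brin-Katok formula \eqref{local-formula}, and the identification $E_{\mu}(f)=P_{\mu}(f,0)$ from the remark. The only thing worth double-checking during the write-up is that the essential supremum in the statement is indeed taken over the $\mu$-full-measure set on which $h_{\mu}(x)$ is well defined via the Brin-Katok limit, which is already implicit in Corollary \ref{mp}.
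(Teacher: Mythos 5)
Your proposal is correct and matches the paper's own treatment: the paper states Corollary \ref{mp1} as an immediate consequence ("Consequently, ...") of Corollary \ref{mp}, i.e., precisely the specialization $\varphi\equiv 0$ with $\varphi^{*}\equiv 0$ and the identification $E_{\mu}(f)=P_{\mu}(f,0)$ that you spell out. Nothing further is needed.
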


\subsection{Measure theoretic pressure and ergodic decompositions}This subsection is devoted to discuss the relation between  the measure theoretic pressure of an invariant measure with its ergodic decompositions. Recall that a probability Borel measure $\tau$ on $\mathcal{M}(X,f)$ (with the weak-star topology) is an \emph{ergodic decomposition} of a measure $\mu\in \mathcal{M}(X,f)$ if $\tau(\mathcal{E}(X,f))=1$ and
\[
\int_X \varphi d\mu=\int_{\mathcal{E}(X,f)} \Big( \int_X \varphi d\nu\Big)d\tau(\nu)
\]
for every $\varphi\in L^1(\mu)$.

The following theorem gives a formula for the measure theoretic pressure of an invariant measure in terms of its ergodic decomposition.

\begin{theorem}\label{mainthm}
Let $(X,f)$ be a TDS, and $\varphi:X\to \mathbb{R}$ a continuous function on $X$ and $\mu\in \mathcal{M}(X,f)$. For any ergodic decomposition $\tau$ of $\mu$ we have
\begin{eqnarray*}
\begin{aligned}
P_\mu(f,\varphi)&=\text{ess} \sup\{P_{\nu}(f,\varphi):\nu\in \mathcal{E}(X,f)\}\\
&=\text{ess} \sup\Big\{h_{\nu}(f)+\int_X \varphi d\nu:\nu\in \mathcal{E}(X,f)\Big\}
\end{aligned}
\end{eqnarray*}
with the essential supremum taken with respect to $\tau$. Consequently, the following property hold:
\[
P_\mu(f,\varphi)\ge h_\mu(f)+\int_X \varphi d\mu.
\]
\end{theorem}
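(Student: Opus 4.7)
The plan is to leverage Corollary \ref{mp}, which already rewrites the measure theoretic pressure as a pointwise essential supremum
\[
P_\mu(f,\varphi) = \text{ess} \sup\{h_\mu(x) + \varphi^*(x): x \in X\}
\]
with respect to $\mu$, and then to convert this $\mu$-essential supremum over points into a $\tau$-essential supremum over ergodic components. Writing $\mu = \int_{\mathcal{E}(X,f)} \nu\, d\tau(\nu)$, for $\mu$-a.e.\ $x$ there is a well-defined ergodic component $\nu_x \in \mathcal{E}(X,f)$, and the Borel map $\Phi: x \mapsto \nu_x$ pushes $\mu$ forward to $\tau$. Birkhoff's ergodic theorem applied within each ergodic fiber immediately gives $\varphi^*(x) = \int \varphi\, d\nu_x$ for $\mu$-a.e.\ $x$.

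The main technical step is the identification
\[
h_\mu(x) = h_{\nu_x}(f) \qquad \text{for } \mu\text{-a.e.\ } x,
\]
a form of Brin--Katok's theorem valid for non-ergodic measures. I would establish it by choosing a sequence of finite measurable partitions $\xi_k$ with $\mu$-null boundaries and diameter tending to $0$, applying the Shannon--McMillan--Breiman theorem to the (possibly non-ergodic) measure $\mu$ to obtain that $-\frac{1}{n}\log \mu(\xi_k^n(x))$ converges $\mu$-a.e.\ to an $f$-invariant limit, observing that by ergodicity of $\nu_x$ this invariant limit equals $h_{\nu_x}(f, \xi_k)$ on a $\nu_x$-full set, and finally sandwiching $\mu(\xi_k^n(x))$ between dynamical balls $\mu(B_n(x,\epsilon))$ exactly as in the original argument of \cite{bk}; letting $k\to\infty$ yields the claim.

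Combining the two identifications, for $\mu$-a.e.\ $x$ the integrand satisfies $h_\mu(x) + \varphi^*(x) = h_{\nu_x}(f) + \int \varphi\, d\nu_x$, a value depending on $x$ only through $\nu_x$. Since $\Phi_*\mu = \tau$, the change-of-variables in the essential supremum gives
\[
P_\mu(f,\varphi) = \text{ess} \sup\Big\{h_\nu(f) + \int \varphi\, d\nu : \nu \in \mathcal{E}(X,f)\Big\},
\]
with the essential supremum taken with respect to $\tau$. Equality with $\text{ess} \sup_\tau\{P_\nu(f,\varphi)\}$ is immediate from \eqref{metric-ent-form}, because for $\tau$-a.e.\ $\nu$ the measure is ergodic and hence $P_\nu(f,\varphi) = h_\nu(f) + \int \varphi\, d\nu$. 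For the final consequence, using the affine character of the Kolmogorov--Sinai entropy in the ergodic decomposition together with linearity of the integral,
\[
h_\mu(f) + \int \varphi\, d\mu = \int_{\mathcal{E}(X,f)} \Big(h_\nu(f) + \int \varphi\, d\nu\Big)\, d\tau(\nu) \le \text{ess} \sup\Big\{h_\nu(f) + \int \varphi\, d\nu\Big\} = P_\mu(f,\varphi).
\]

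The principal obstacle is precisely the pointwise identification $h_\mu(x) = h_{\nu_x}(f)$: the quantity $h_\mu(x)$ is defined through the measure $\mu$ of dynamical balls centered at $x$, whereas $\nu_x$ need bear no absolute continuity relation to $\mu$ on any neighborhood of $x$; only by passing through refining partitions with $\mu$-null boundaries, applying SMB for the non-ergodic measure $\mu$, and carefully comparing partition elements to dynamical $\epsilon$-balls can one bridge from the $\mu$-geometry of Bowen balls to the Kolmogorov--Sinai entropy of the ergodic component that governs the right-hand side.
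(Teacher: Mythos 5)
Your overall architecture (reduce to Corollary \ref{mp}, then convert the $\mu$-essential supremum over points into a $\tau$-essential supremum over components) is sound, and your treatment of $\varphi^*(x)=\int\varphi\,d\nu_x$ and of the final inequality via affinity of entropy is correct. But the proof hinges entirely on the pointwise identification $h_\mu(x)=h_{\nu_x}(f)$, and the justification you give for it does not work. The Shannon--McMillan--Breiman theorem for the non-ergodic measure $\mu$ yields that $-\frac1n\log\mu(\xi_k^n(x))$ converges a.e.\ to an invariant function, namely $E_\mu\bigl(I_\mu(\xi_k\mid\bigvee_{j\ge1}f^{-j}\xi_k)\mid\mathcal{I}\bigr)(x)$, where the conditional information is computed \emph{with respect to $\mu$}. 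Ergodicity of $\nu_x$ only tells you this limit is constant on the fiber of $\nu_x$; it does not identify that constant with $h_{\nu_x}(f,\xi_k)$, which is defined through $\nu_x$-conditional information. Closing this gap requires genuinely more: one needs the harmonicity $h_\mu(f,\xi)=\int h_\nu(f,\xi)\,d\tau(\nu)$ of partition entropy under ergodic decomposition together with a Gibbs-inequality argument showing the $\mu$-conditional information dominates the $\nu_x$-conditional one fiberwise, and then equality of integrals to upgrade the a.e.\ inequality to a.e.\ equality; after that one still has to pass from partitions to Bowen balls in the non-ergodic setting. None of this is ``immediate,'' and as written the step ``by ergodicity of $\nu_x$ this invariant limit equals $h_{\nu_x}(f,\xi_k)$'' is a genuine gap.

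It is worth noting that the paper deliberately avoids this pointwise identification. For the inequality $P_\mu(f,\varphi)\ge\operatorname{ess\,sup}_\tau P_\nu(f,\varphi)$ it argues directly: any $Z$ with $\mu(Z)=1$ has $\nu(Z)=1$ for $\tau$-a.e.\ $\nu$, hence $P_Z(f,\varphi)\ge P_\nu(f,\varphi)$. For the reverse inequality it covers a full-measure invariant set by countably many invariant sets $\Upsilon_i$ on which $h_\mu(\cdot)$ and $\varphi^*(\cdot)$ oscillate by at most $\epsilon$, uses only the \emph{integral} identity \eqref{pt-global} restricted to $\Upsilon_i$ together with the ergodic decomposition of the normalized restriction $\mu_i$, and then pigeonholes to find a positive-$\tau$-measure set of components $\nu$ with $h_\nu(f)+\int\varphi\,d\nu\ge h_\mu(x)+\varphi^*(x)-5\epsilon$ on $\Upsilon_i$. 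This yields exactly the one-sided comparison needed for the essential supremum without ever asserting $h_\mu(x)=h_{\nu_x}(f)$. If you want to salvage your route, either supply the full argument for the key lemma along the lines sketched above, or switch to the paper's localization-and-pigeonhole device, which only needs the much weaker integral formula.
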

\begin{proof}The second equality in the statement is clear since $P_{\nu}(f,\varphi)=h_{\nu}(f)+\int_X \varphi d\nu$ for each $\nu\in \mathcal{E}(X,f)$. Therefore, to complete the proof of the theorem, it suffices to prove the first equality. Take a subset $Z\subset X$ with $\mu(Z)=1$, then $\nu(Z)=1$ for $\tau$-almost every $\nu\in \mathcal{E}(X,f)$. This implies that $P_Z(f,\varphi)\ge P_{\nu}(f,\varphi)$ for $\tau$-almost every $\nu\in \mathcal{E}(X,f)$, which yields that
\[
P_Z(f,\varphi)\ge \text{ess} \sup\{P_{\nu}(f,\varphi):\nu\in \mathcal{E}(X,f)\}.
\]
Since $Z$ is chosen arbitrarily, by the definition of measure theoretic pressure we have that
\[
P_\mu(f,\varphi)\ge \text{ess} \sup\{P_{\nu}(f,\varphi):\nu\in \mathcal{E}(X,f)\}.
\]

To prove the reverse inequality, let $$X_0:=\Big\{x\in X: h_{\mu}(x)~\text{and}~\varphi^*(x)~\text{in}~\eqref{local-formula}~\text{is well-defined}\Big\}.$$ By Brin-Katok's local entropy formula and Birkhoff's ergodic theorem, the subset $X_0$ is  $f$-invariant  and of full $\mu$-measure. Given a small number $\epsilon>0$ and $x\in X_0$, define
\[
 \Upsilon(x)=\Big\{y\in X_0: |h_{\mu}(y)-h_{\mu}(x)|<\epsilon~\text{and}~|\varphi^*(y)-\varphi^*(x)|<\epsilon\Big\}.
\]
Clearly, $X_0=\bigcup_{x\in X_0} \Upsilon(x)$. We can choose points $y_i\in X_0$ for $i=1,2,\cdots$  such that each $\Upsilon_i:=\Upsilon(y_i)$ has positive $\mu$-measure and $\mu(\bigcup_{i\ge 1} \Upsilon_i)=1$. Note that each $\Upsilon_i$ is $f$-invariant, since the functions $h_\mu(x)$ and $\varphi^*(x)$ are $f$-invariant. Fix $i$ and let $\mu_i$ denote the normalised restriction of $\mu$ to $\Upsilon_i$. It follows from \eqref{pt-global} and the definition of $\Upsilon_i$ that
\[
h_{\mu}(y_i)-\epsilon\le h_{\mu_i}(f|\Upsilon_i)=\frac{1}{\mu(\Upsilon_i)}\int_{\Upsilon_i} h_{\mu}(x) d\mu \le h_{\mu}(y_i)+\epsilon.
\]
Let $\mathcal{M}_i(X,f)=\{\mu\in \mathcal{M}(X,f): \mu(\Upsilon_i)=1\}$. Since $\Upsilon_i$ is $f$-invariant, there is a one-to-one correspondence between the $f$-invariant ergodic probability measure on $\Upsilon_i$ and the measures in $\mathcal{E}_i(X,f):=\mathcal{M}_i(X,f)\cap \mathcal{E}(X,f)$. Using standard arguments, one can show that $\tau(\mathcal{E}_i(X,f))>0$ and that the normalization of $\tau_i$ of $\tau|\mathcal{M}_i(X,f)$ provides and ergodic decomposition of $\mu_i$. Since
\[
 h_{\mu_i}(f|\Upsilon_i)=\int_{\mathcal{E}_i(X,f)} h_{\nu}(f) d\tau_i(\nu),
\]
there exists a set $A_i\subset \mathcal{E}_i(X,f)$ of positive $\tau_i$-measure (thus also with positive $\tau$-measure) such that $h_\nu(f)\ge h_{\mu_i}(f|\Upsilon_i)-\epsilon$ for every $\nu\in A_i$. Therefore, for each $\nu\in A_i$ and each $x\in \Upsilon_i$ we have that
\[
h_\nu(f)\ge h_{\mu_i}(f|\Upsilon_i)-\epsilon\ge h_{\mu}(y_i)-2\epsilon\ge h_{\mu}(x)-3\epsilon
\]
and
\[
\int_X \varphi d\nu =\int_{\Upsilon_i} \varphi d\nu \ge \varphi^*(y_i)-\epsilon\ge  \varphi^*(x)-2\epsilon.
\]
Hence, for each $\nu\in A_i$ we have
\[
h_\nu(f)+\int_X \varphi d\nu \ge  h_{\mu}(x)+\varphi^*(x)-5\epsilon.
\]
Since $\tau(A_i)>0$ and $\mu(\Upsilon_i)>0$, it follows from Corollary \ref{mp1} that
\begin{eqnarray*}
P_\mu(f,\varphi)&=&\text{ess} \sup\{h_\mu(x)+\varphi^*(x):x\in X_0\}\\
&\le& \text{ess} \sup\Big\{h_\mu(f)+\int_X\varphi d\nu: \nu\in \mathcal{E}(X,f)\Big\}+5\epsilon.
\end{eqnarray*}
Since $\epsilon$ is arbitrary, the desired result immediately follows.

To complete the proof of the theorem, we notice that
\[
h_{\mu}(f)+\int_X \varphi d\mu =\int_{\mathcal{E}(X,f)} \bigr(h_{\nu}(f)+\int_X \varphi d\nu\bigr) d\tau(\nu).
\]
This together with the first result yield that
\[
h_{\mu}(f)+\int_X \varphi d\mu \le P_\mu(f,\varphi).
\]
\end{proof}

Consequently, we have the following formula.

\begin{corollary}\label{mainthm-ent}
Let $(X,f)$ be a TDS and $\mu\in \mathcal{M}(X,f)$. For any ergodic decomposition $\tau$ of $\mu$ we have
\begin{eqnarray*}
E_\mu(f)=\text{ess} \sup\{h_{\nu}(f):\nu\in \mathcal{E}(X,f)\}
\end{eqnarray*}
with the essential supremum taken with respect to $\tau$. Consequently, we have that
\[
E_\mu(f)\ge h_\mu(f).
\]
\end{corollary}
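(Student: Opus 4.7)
The proposed proof is essentially a direct specialization of Theorem \ref{mainthm} to the potential $\varphi \equiv 0$, so the plan is short. First, I would invoke the Remark following the definition of $P_\mu(f,\varphi)$, which states that $P_\mu(f,0) = E_\mu(f)$. Hence setting $\varphi \equiv 0$ in Theorem \ref{mainthm} gives
\begin{equation*}
E_\mu(f) = P_\mu(f,0) = \text{ess}\sup\Big\{ h_\nu(f) + \int_X 0 \, d\nu : \nu \in \mathcal{E}(X,f)\Big\} = \text{ess}\sup\{ h_\nu(f) : \nu \in \mathcal{E}(X,f)\},
\end{equation*}
with the essential supremum taken with respect to $\tau$. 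This establishes the first identity of the corollary.

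For the consequence $E_\mu(f) \geq h_\mu(f)$, I would recall the Jacobs (ergodic decomposition) theorem for Kolmogorov--Sinai entropy, which asserts that
\begin{equation*}
h_\mu(f) = \int_{\mathcal{E}(X,f)} h_\nu(f) \, d\tau(\nu).
\end{equation*}
Since any integral of a nonnegative measurable function is bounded above by its essential supremum with respect to the same measure, this gives
\begin{equation*}
h_\mu(f) \leq \text{ess}\sup\{ h_\nu(f) : \nu \in \mathcal{E}(X,f)\} = E_\mu(f),
\end{equation*}
which is the desired inequality. Alternatively, one could simply specialize the last inequality in Theorem \ref{mainthm}, namely $P_\mu(f,\varphi) \geq h_\mu(f) + \int_X \varphi\, d\mu$, to $\varphi \equiv 0$, avoiding any separate appeal to the decomposition of entropy.

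There is no genuine obstacle here: the entire argument is a substitution. The only mild care required is to be explicit about the identifications $P_\mu(f,0) = E_\mu(f)$ and $P_\nu(f,0) = h_\nu(f)$ for ergodic $\nu$ (the latter being the content of equation \eqref{metric-ent-form}), and to point out that the final inequality does not require any new work beyond what Theorem \ref{mainthm} already delivers.
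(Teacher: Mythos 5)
Your proposal is correct and matches the paper exactly: the corollary is stated as an immediate consequence of Theorem \ref{mainthm}, obtained by setting $\varphi \equiv 0$ and using the identifications $E_\mu(f)=P_\mu(f,0)$ and $P_\nu(f,0)=h_\nu(f)$ for ergodic $\nu$. The final inequality likewise follows by specializing the last inequality of Theorem \ref{mainthm} (equivalently, via the ergodic decomposition of entropy), just as you note.
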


\begin{remark}
The inequality in the above corollary is first established by Bowen, see \cite[Theorem 1]{bo73} for details.  Our method can be regarded as another simple proof.
\end{remark}

\begin{remark} We provide an example that the inequality in Corollary \ref{mainthm-ent} may be strict. In fact, choose two ergodic measures $\mu_1,\mu_2$ such that $h_{\mu_1}(f)>h_{\mu_2}(f)$, let $\mu=\frac1 2\mu_1+\frac1 2\mu_2$, it is clear $\mu$ is $f$-invariant and satisfies that
\begin{enumerate}
\item[(1)] $E_\mu(f)=\max\{E_{\mu_1}(f), E_{\mu_2}(f)\}=h_{\mu_1}(f)$;
\item[(2)] $h_\mu(f)=\frac 1 2(h_{\mu_1}(f)+h_{\mu_2}(f)) $.
\end{enumerate}
Hence, $E_\mu(f)>h_\mu(f)$.
\end{remark}

\subsection{Measure theoretic pressure and separated sets} In this subsection, we will show that the measure theoretic pressure of ergodic measures defined via separated sets are exactly the free energy without any additional condition on the dynamical system $(X,f)$. The same phenomena can be observed in the non-ergodic case provided that the system $(X,f)$ has the uniform separation property and the ergodic measures are entropy dense.

\begin{theorem}\label{sep-erg}Let $(X,f)$ be a TDS, and $\varphi:X\to \mathbb{R}$ a continuous function on $X$. For any $\mu\in \mathcal{E}(X,f)$ we have that
\begin{equation*}
SP_{\mu}(f,\varphi)=SP_{\mu}'(f,\varphi)=h_\mu(f)+\int \varphi d\mu.
\end{equation*}
\end{theorem}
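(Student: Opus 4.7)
The plan is to establish two inequalities---the lower bound $\underline{SP}_\mu'(f,\varphi)\ge h_\mu(f)+\int\varphi\,d\mu$ and the upper bound $\overline{SP}_\mu(f,\varphi)\le h_\mu(f)+\int\varphi\,d\mu$---and then squeeze using $\underline{SP}_\mu'\le\underline{SP}_\mu\le\overline{SP}_\mu$ together with $\underline{SP}_\mu'\le\overline{SP}_\mu'\le\overline{SP}_\mu$. Both chains are justified by the fact that every $(\delta,n,\epsilon)$-separated set is in particular $(n,\epsilon)$-separated, so $P(F;\varphi,\delta,n,\epsilon)\le P(F;\varphi,n,\epsilon)$ for every neighborhood $F$ of $\mu$.

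The lower bound falls out of Proposition~\ref{ar1}. Given $\eta>0$, that proposition supplies $\delta^{*},\epsilon^{*}>0$ and, for each neighborhood $F$ of $\mu$, a threshold $n^{*}_{F,\mu}$ so that for $n\ge n^{*}_{F,\mu}$ there exists a $(\delta^{*},n,\epsilon^{*})$-separated $\Gamma_n\subset X_{n,F}$ with $\sum_{x\in\Gamma_n} e^{S_n\varphi(x)}\ge \exp[n(h_\mu(f)+\int\varphi\,d\mu-\eta)]$. Taking $\liminf_n\tfrac{1}{n}\log$, then $\inf_F$, and finally letting $\epsilon\to 0$ and $\eta\to 0$ gives the lower bound.

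For the upper bound I would apply Lemma~\ref{le5}. Fix $\epsilon>0$ and a convex weak-$*$ neighborhood $F$ of $\mu$, and for each $n$ choose an $(n,\epsilon)$-separated set $E_n\subset X_{n,F}$ with $\sum_{y\in E_n} e^{S_n\varphi(y)}\ge\tfrac{1}{2} P(F;\varphi,n,\epsilon)$. Form $\sigma_n$ and $\mu_n$ as in the lemma; a short computation gives $\mu_n=\sum_{y\in E_n} w_y\mathcal{E}_n(y)$ with $w_y=e^{S_n\varphi(y)}/\sum_z e^{S_n\varphi(z)}$, a convex combination of empirical measures each lying in $F$, so by convexity $\mu_n\in F$. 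Extracting a subsequence $n_k$ along which $\mu_{n_k}\to\mu^{*}$ weak-$*$ and $\tfrac{1}{n_k}\log P(F;\varphi,n_k,\epsilon)\to\limsup_n \tfrac{1}{n}\log P(F;\varphi,n,\epsilon)$, Lemma~\ref{le5} yields $\limsup_n\tfrac{1}{n}\log P(F;\varphi,n,\epsilon)\le h_{\mu^{*}}(f)+\int\varphi\,d\mu^{*}$ with $\mu^{*}\in\overline{F}\cap\mathcal{M}(X,f)$.

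The final step is to pass to the limit as $F\to\{\mu\}$ and $\epsilon\to 0$. Weak-$*$ convergence $\mu^{*}\to\mu$ takes care of the integral term by continuity of $\nu\mapsto\int\varphi\,d\nu$. For the entropy term one cannot invoke upper semi-continuity of $\nu\mapsto h_\nu(f)$ in general, so here is where I expect to use the ergodicity of $\mu$ essentially, via the Brin--Katok formula and Egorov's theorem. I would pick a set $A$ with $\mu(A)$ close to $1$, $\mu(\partial A)=0$, on which $\mu(B_n(x,\epsilon))\ge e^{-n(h_\mu(f)+\eta)}$ uniformly for $x\in A$ and $n$ large; continuity of $\nu\mapsto\nu(A)$ at $\mu$ then forces every orbit in $X_{n,F}$ to spend at least a $(1-2\delta)$-fraction of its time in $A$ once $F$ is small. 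A disjoint Bowen-ball count bounds $\#(E_n\cap A)\le e^{n(h_\mu(f)+\eta)}$, and a shifting argument exploiting the heavy occupation of $A$ by orbits controls the remaining $E_n\setminus A$ with the same order. This last counting step---circumventing the absent upper semi-continuity of entropy by a direct Brin--Katok/Egorov argument that genuinely uses ergodicity---is the main technical obstacle, and is the reason no additional dynamical hypothesis on $(X,f)$ is needed in the ergodic case.
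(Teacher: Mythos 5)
Your overall architecture coincides with the paper's: the lower bound comes from Proposition~\ref{ar1}, the upper bound from Lemma~\ref{le5}, and the squeeze $\underline{SP}_{\mu}'\le\overline{SP}_{\mu}'\le\overline{SP}_{\mu}$, $\underline{SP}_{\mu}'\le\underline{SP}_{\mu}\le\overline{SP}_{\mu}$ finishes the argument. The lower bound is correct as written. The problem is the upper bound: you stop at the estimate $\limsup_n\frac1n\log P(F;\varphi,n,\epsilon)\le h_{\mu^*}(f)+\int\varphi\,d\mu^*$ for some limit point $\mu^*\in\overline{F}\cap\mathcal{M}(X,f)$, correctly observe that the entropy term cannot be passed to the limit $F\to\{\mu\}$ by semicontinuity, and then propose a Brin--Katok/Egorov counting argument that you do not carry out. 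That last step is a genuine gap, and the sketch you give does not close it. The obstruction is that an $(n,\epsilon)$-separated set $E_n\subset X_{n,F}$ consists of points whose \emph{empirical} measures are near $\mu$, but the points themselves need not lie in (or even near) any prescribed set of large $\mu$-measure, and the Bowen balls $B_n(x,\epsilon/2)$ around them may have zero $\mu$-measure (e.g.\ $x\notin\supp\mu$ with orbit falling into $\supp\mu$ only later). So the disjoint-Bowen-ball count controls only $E_n\cap A$. Your ``shifting argument'' for $E_n\setminus A$ does not survive the separation condition: if $x\ne y$ are $(n,\epsilon)$-separated only at some time $i$ \emph{before} their first entrance time $j$ into $A$, then $f^jx$ and $f^jy$ need not be separated at all, and recovering injectivity forces you to count $(j,\epsilon)$-separated initial segments, a quantity of order $e^{j h_{\topp}(f,\epsilon)}$ that is uncontrolled for a general TDS (where $h_{\topp}(f)$ may even be infinite).

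The paper avoids all of this by a diagonalization over neighborhoods rather than a limit over fixed $F$. Arguing by contradiction from $\inf_{F\ni\mu}\limsup_n\frac1n\log P(F;\varphi,n,\epsilon)\ge h_\mu(f)+\int\varphi\,d\mu+2\delta$, one picks a decreasing sequence of convex closed neighborhoods $C_n$ with $\bigcap_nC_n=\{\mu\}$ and chooses the near-optimal separated set $E_n$ inside $X_{n,C_n}$, i.e.\ the neighborhood shrinks with $n$. Then the measures $\mu_n$ of Lemma~\ref{le5} satisfy $\mu_n\in C_n$ (by the convexity observation you already made), so \emph{every} weak-star limit point of $(\mu_n)$ is $\mu$ itself, and Lemma~\ref{le5} delivers the bound $h_\mu(f)+\int\varphi\,d\mu$ directly, with no semicontinuity of entropy and no counting needed. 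Note that this version of the upper bound holds for every $\mu\in\mathcal{M}(X,f)$, not only ergodic ones; ergodicity is used only in the lower bound via Proposition~\ref{ar1}. You should replace your limiting step by this diagonal choice of $E_n\subset X_{n,C_n}$.
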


\begin{proof}
We divide the proof into two small steps:\vskip0.2cm

{\it Step 1: $\overline{SP}_{\mu}(f,\varphi)\le h_{\mu}(f)+\int \varphi d\mu$ for any $\mu\in \mathcal{M}(X,f)$.}

If $h_\mu(f)=+\infty$, then there is nothing to prove. Let  $h_\mu(f)<\infty$. Assume  that
$$\overline{SP}_{\mu}(f,\varphi)=\lim_{\epsilon\rightarrow 0}\inf_{F\ni\mu}\limsup_{n\rightarrow\infty}\frac{1}{n}\log P(F;\varphi,n,\epsilon)> h_\mu(f)+\int \varphi d\mu.$$
There exists $\epsilon^{*} > 0$ and $\delta > 0$ such that for any $\epsilon\leq\epsilon^{*}$,
$$\inf_{F\ni\mu}\limsup_{n\rightarrow \infty}\frac{1}{n}\log P(F;\varphi,n,\epsilon) \geq h_\mu(f)+\int \varphi d\mu+2\delta.$$
 Fix a number $0<\epsilon\leq\epsilon^{*}$. There exists a decreasing sequence of convex closed neighborhoods $\{C_{n}\}_{n\geq1}$ such that $\displaystyle\bigcap_{n}C_{n}=\{\mu\}$, and
\begin{equation}\label{Upper-bound}
\limsup_{n\rightarrow\infty}\frac{1}{n}\log P(C_{n};\varphi,n,\epsilon)\geq h_\mu(f)+\int \varphi d\mu+2\delta.
\end{equation}
For each $\beta>0$ and each $n\geq1$, let $E_{n}\subset X_{n,C_{n}}$ be a maximal $(n,\epsilon)$-separated subset such that \begin{equation*}
\frac{1}{n}\log\sum_{y\in E_{n}}e^{S_n\varphi(y)}\geq\frac{1}{n}\log P(C_{n};\varphi,n,\epsilon)-\beta.
\end{equation*}
Let \begin{equation*}
\sigma_{n}=\frac{\displaystyle\sum_{y\in E_{n}}e^{S_n\varphi(y)}\delta_{y}}{\displaystyle\sum_{z\in E_{n}}e^{S_n\varphi(z)}},\ \ \  \mu_{n}=\frac{1}{n}\sum^{n-1}_{i=0}\sigma_{_{n}}\circ f^{-i}.
\end{equation*}
By the construction of $\{\mu_n\}_{n\ge1}$, we know that  $\mu_{n}\in C_{n}$ for each $n\ge 1$ and $\displaystyle\lim_{n\rightarrow \infty}\mu_{n}=\mu$. By Lemma $\ref{le5}$\begin{equation*}
\limsup_{n\rightarrow \infty}\frac{1}{n}\log P(C_{n};\varphi,n,\epsilon)-\beta \leq \limsup_{n\rightarrow \infty}\frac{1}{n}\log\sum_{y\in E_{n}}e^{\varphi_{n}(y)}\leq h_\mu(f)+\int \varphi d\mu.
\end{equation*}
Since $\beta$ is arbitrary, this yields a contradiction with \eqref{Upper-bound}. \vskip0.2cm

{\it Step 2: For any $\mu\in \mathcal{E}(X,f)$, we have that $SP_{\mu}(f,\varphi)=SP_{\mu}'(f,\varphi)=h_\mu(f)+\int \varphi d\mu$. }

Since $P(F;\varphi,\delta,n,\epsilon) \leq P(F;\varphi,n,\epsilon)$ for any $\delta\in (0,1)$ and any $\epsilon>0$, by the first step we have that
\begin{equation*}
\underline{SP}_{\mu}'(f,\varphi)\le\overline{SP}_{\mu}'(f,\varphi)\le\underline{SP}_{\mu}(f,\varphi)\leq\overline{SP}_{\mu}(f,\varphi)\leq h_\mu(f)+\int \varphi d\mu.
\end{equation*}
On the other hand, it yields from Proposition \ref{ar1} that
\begin{equation*}
h_\mu(f)+\int \varphi d\mu \leq \lim_{\epsilon\rightarrow0}\lim_{\delta\rightarrow0}\inf_{F\ni\mu}\liminf_{n\rightarrow\infty}\frac{1}{n}\log P(F;\varphi,\delta,n,\epsilon)=\underline{SP}_{\mu}'(f,\varphi).
\end{equation*}
This completes the proof of the theorem.
\end{proof}

Next, we will consider the measure theoretic pressure of non-ergodic measures defined via separated sets.

\begin{theorem}Assume that the TDS $(X,f)$ has the uniform separation property  and the ergodic measures are entropy dense, and $\varphi$ is a continuous function on $X$. Then for any $\mu\in \mathcal{M}(X,f)$,
\begin{equation*}
SP_{\mu}(f,\varphi)=SP_{\mu}'(f,\varphi)=h_\mu(f)+\int \varphi d\mu.
\end{equation*}
\end{theorem}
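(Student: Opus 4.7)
The plan is to run the same two-step argument as in Theorem \ref{sep-erg}, replacing its ergodic input (Proposition \ref{ar1}) by the non-ergodic strengthening provided by Corollary \ref{cor1}. Crucially, Step~1 of the proof of Theorem \ref{sep-erg} actually established the inequality
\[
\overline{SP}_{\mu}(f,\varphi) \le h_\mu(f)+\int \varphi d\mu
\]
for \emph{every} $\mu \in \mathcal{M}(X,f)$ (not just ergodic ones): its only input is Lemma \ref{le5} applied to a weak-$*$ limit of the measures $\mu_n$ constructed from near-maximising $(n,\epsilon)$-separated sets in $X_{n,C_n}$ for a decreasing sequence of convex closed neighborhoods $C_n$ shrinking to $\{\mu\}$, and nothing in that argument uses ergodicity. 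Consequently the upper half of the chain
\[
\underline{SP}_{\mu}'(f,\varphi) \le \overline{SP}_{\mu}'(f,\varphi) \le \underline{SP}_{\mu}(f,\varphi) \le \overline{SP}_{\mu}(f,\varphi) \le h_\mu(f)+\int \varphi d\mu
\]
follows with no extra work, and this half does not even need the standing hypotheses of the present theorem.

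For the matching lower bound, I would fix $\eta>0$ and invoke Corollary \ref{cor1}, which under the uniform separation and entropy density hypotheses supplies uniform constants $\delta^*>0$, $\epsilon^*>0$ with the following property: for every neighborhood $F$ of $\mu$ and all sufficiently large $n$, there exists a $(\delta^*,n,\epsilon^*)$-separated subset $\Gamma_n \subset X_{n,F}$ with
\[
\sum_{x\in\Gamma_n} e^{S_n\varphi(x)} \ge \exp\Bigl[n\Bigl(h_\mu(f)+\int \varphi d\mu-\eta\Bigr)\Bigr].
\]
Since $\Gamma_n$ is a competitor in the definition of $P(F;\varphi,\delta^*,n,\epsilon^*)$, this forces
\[
\liminf_{n\to\infty}\frac{1}{n}\log P(F;\varphi,\delta^*,n,\epsilon^*) \ge h_\mu(f)+\int \varphi d\mu - \eta
\]
for every $F \ni \mu$. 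Taking $\inf_{F}$, then $\epsilon \to 0$, and finally $\eta \to 0$ yields $\underline{SP}'_{\mu}(f,\varphi) \ge h_\mu(f)+\int \varphi d\mu$, which combined with the preceding chain forces every one of the four extremal quantities to equal $h_\mu(f)+\int \varphi d\mu$.

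All the real content has already been absorbed into Corollary \ref{cor1}; the genuinely hard step there is the two-level approximation, where entropy density is used to choose an ergodic $\nu$ close to the non-ergodic $\mu$ with $h_\nu(f)$ and $\int\varphi\,d\nu$ only slightly below their $\mu$-counterparts, and uniform separation guarantees that the constants $\delta^*,\epsilon^*$ produced for $\nu$ by Proposition \ref{pro2} are independent of $\nu$ (and hence of the ambient $\mu$). Given that machinery, the present theorem is essentially an exercise in assembling the ingredients along the same template as Theorem \ref{sep-erg}, and I do not anticipate any further obstacle beyond bookkeeping the quantifiers $\epsilon \to 0$ and $\eta \to 0$ correctly.
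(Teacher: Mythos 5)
Your proposal is correct and follows essentially the same route as the paper: the upper bound comes from the chain $\underline{SP}_{\mu}'\le\overline{SP}_{\mu}'\le\underline{SP}_{\mu}\le\overline{SP}_{\mu}\le h_\mu(f)+\int\varphi\,d\mu$ (whose last inequality is Step~1 of Theorem \ref{sep-erg}, valid for all invariant measures), and the lower bound on $\underline{SP}_{\mu}'(f,\varphi)$ comes from the $(\delta^*,n,\epsilon^*)$-separated sets supplied by Corollary \ref{cor1}. The only point to keep straight is that the limits in $\underline{SP}_{\mu}'$ are taken in the order $\epsilon\to 0$, $\delta\to 0$, $\inf_F$, $\liminf_n$, but monotonicity of $P(F;\varphi,\delta,n,\epsilon)$ in $\delta$ and $\epsilon$ makes your bookkeeping go through.
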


\begin{proof}
Since $P(F;\varphi,\delta,n,\epsilon) \leq P(F;\varphi,n,\epsilon)$ for any $\delta\in (0,1)$ and any $\epsilon>0$, we have that
\begin{equation*}
\underline{SP}_{\mu}'(f,\varphi)\le\overline{SP}_{\mu}'(f,\varphi)\le\underline{SP}_{\mu}(f,\varphi)\leq\overline{SP}_{\mu}(f,\varphi)\leq h_\mu(f)+\int \varphi d\mu.
\end{equation*}
From Corollary \ref{cor1}, we have that
\begin{equation}\label{eq8}
h_\mu(f)+\int \varphi d\mu \leq \lim_{\epsilon\rightarrow0}\lim_{\delta\rightarrow0}\inf_{F\ni\mu}\liminf_{n\rightarrow\infty}\frac{1}{n}\log P_{\mu}(F;\varphi,\delta,n,\epsilon)=\underline{SP}_{\mu}'(f,\varphi).
\end{equation}
This completes the proof of the theorem.
\end{proof}

\subsection{Applications: zero of measure theoretic pressure and Hausdorff dimension of invariant measures} Let $f:M\rightarrow M$ be a $C^1$ map on a $d$-dimensional smooth Riemannian manifold $M$, and  $J\subset M$  a compact $f$-invariant subset. We assume that $J$ is an average conformal repeller of $f$. For any invariant measure $\mu$ (not necessarily ergodic) supported on $J$,  we will show that the zero of measure theoretic pressure is equal to the Hausdorff dimension of the measure $\mu$.

We first give a formula for the Hausdoff dimension of an invariant measure in terms of its ergodic decomposition.

\begin{theorem}\label{dim-dec} Let $J$ be an average conformal repeller of a $C^1$ map $f$, and let $\mu\in \mathcal{M}(f|_J)$ be an invariant measure on $J$. For any ergodic decomposition $\tau$ of $\mu$ we have
\[
\dim_H\mu =\text{ess} \sup \{\dim_H\nu: ~\nu\in \mathcal{E}(f|_J)\}
\]
with the essential supremum taken with respect to $\tau$.
\end{theorem}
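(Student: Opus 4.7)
The plan is to combine the pointwise dimension formula \eqref{ptdim-acr} with the behaviour of the local entropy and the Lyapunov exponent under ergodic decomposition. Write $d(x):=h_{\mu}(x)/\lambda(x)$, which by \eqref{ptdim-acr} equals the common value of $\underline{d}_{\mu}(x)$ and $\overline{d}_{\mu}(x)$ for $\mu$-a.e. $x\in J$.

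The lower bound $\dim_H\mu \geq \text{ess} \sup_\tau \dim_H\nu$ is the easy half. For any Borel set $Z\subset J$ with $\mu(Z)=1$, the identity $1=\int_{\mathcal{E}(f|_J)} \nu(Z)\,d\tau(\nu)$ forces $\nu(Z)=1$ for $\tau$-a.e. $\nu$, hence $\dim_H Z \geq \dim_H\nu$ for such $\nu$, and taking the infimum over $Z$ yields the bound.

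For the upper bound I would first show $\dim_H\mu=\text{ess} \sup_\mu d(x)$. Young's criterion \cite{young82} applied with $C_2=\text{ess} \sup_\mu d(x)$ gives the inequality $\leq$. For the reverse, fix $s<\text{ess} \sup_\mu d$, let $E=\{d>s\}$, and form the normalised restriction $\mu_E=\mu|_E/\mu(E)$. A Lebesgue-density argument shows $\underline{d}_{\mu_E}(x)=\underline{d}_{\mu}(x)=d(x)>s$ for $\mu_E$-a.e. $x$, so $\dim_H\mu_E\geq s$ by Young again; since every $Z$ of full $\mu$-measure satisfies $\mu_E(Z)=1$, we obtain $\dim_H Z \geq \dim_H\mu_E \geq s$, hence $\dim_H\mu\geq s$, and letting $s\uparrow \text{ess} \sup_\mu d$ concludes this step. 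Then I would identify $\text{ess} \sup_\mu d(x)$ with $\text{ess} \sup_\tau \dim_H\nu$: for $\mu$-a.e. $x$ the empirical measures $\frac{1}{n}\sum_{i=0}^{n-1}\delta_{f^ix}$ converge to an ergodic component $\nu_x$, and the push-forward of $\mu$ under $x\mapsto\nu_x$ is $\tau$. Brin-Katok applied to $\nu_x$ gives $h_{\mu}(x)=h_{\nu_x}(f)$, while Kingman's theorem applied to $\log\|D_x f^n\|$ under $\nu_x$ gives $\lambda(x)=\lambda(\nu_x)$; by \eqref{dim-acr-erg} this means $d(x)=\dim_H\nu_x$, and the change of variables $x\mapsto\nu_x$ identifies the two essential suprema.

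The main technical obstacle is the pointwise identification $h_{\mu}(x)=h_{\nu_x}(f)$, since $h_{\mu}(x)$ is defined from the balls $\mu(B_n(x,\epsilon))$ under the \emph{global} measure $\mu$, not the ergodic component $\nu_x$. A cleaner alternative that avoids this subtlety is to mimic the localisation used in the proof of Theorem \ref{mainthm}: cover a full-measure $f$-invariant set $X_0$ by countably many Borel pieces $\Upsilon_i$ on which $h_{\mu}(\cdot)$ and $\lambda(\cdot)$ are constant up to $\epsilon$, restrict $\tau$ to the corresponding piece of $\mathcal{E}(f|_J)$ to obtain an ergodic decomposition of $\mu|_{\Upsilon_i}/\mu(\Upsilon_i)$, and use \eqref{pt-global} together with Brin-Katok and the sub-additive ergodic theorem to extract ergodic measures whose Hausdorff dimensions (given by \eqref{dim-acr-erg}) approximate $d(y_i)$ within $\epsilon$; taking $\epsilon\to 0$ and passing to the countable union over $i$ then yields the reverse inequality.
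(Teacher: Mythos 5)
Your proposal is correct and, in the form you ultimately commit to (the localisation via the sets $\Upsilon_i$ on which $h_\mu(\cdot)$ and $\lambda(\cdot)$ are nearly constant, combined with \eqref{pt-global}, \eqref{dim-acr-erg} and the restricted ergodic decomposition), is exactly the paper's proof; the only cosmetic difference is that the paper quotes Proposition 4 of \cite{bw} for the identity $\dim_H\mu=\text{ess}\sup_\mu\, h_\mu(x)/\lambda(x)$ where you sketch it directly from Young's criterion. You were also right to flag the naive pointwise identification $h_\mu(x)=h_{\nu_x}(f)$ as the weak point of the first route and to route around it.
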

\begin{proof}Given an $f$-invariant measure $\mu$, by Proposition 4 in \cite{bw} we have that
\[
\dim_H\mu\ge \text{ess} \sup \{\dim_H\nu: ~\nu\in \mathcal{E}(f|_J)\}.
\]
On the other hand, we follow the proof of Theorem 7 in \cite{bw} to prove the reverse inequality. By \eqref{ptdim-acr} and Proposition 4 in \cite{bw} we have that
\begin{eqnarray}\label{dim-pt-dec}
\dim_H\mu= \text{ess} \sup \Big\{\frac{h_\mu(x)}{\lambda(x)}: ~x\in J\Big\}.
\end{eqnarray}
Put $\varphi(x):=\frac{1}{d}\log (|\det(D_xf)|)$, where $d=\dim M$. Since
\[
\lim_{n\to\infty}\frac 1n (\log \|D_{x}f^n\|-\log m(D_{x}f^n))=0
\]
uniformly on $J$ (see \cite[Theorem 4.2]{bch} for details), where $m(\cdot)$ denotes the minimum norm of an operator, and
\[
\log m(D_{x}f^n)\le \frac{1}{d}\log (|\det(D_xf^n)|)\le \log \|D_{x}f^n\|,
\]it yields that
\[
\lambda(x)=\lim_{n\to\infty}\frac 1n \sum_{i=0}^{n-1}\varphi(f^i(x)),~~\mu-a.e.~x\in J.
\]
Let
\[
X=\{x\in J: \lambda(x)~\text{and}~h_\mu(x)~\text{are well-defined~} \},
\]
by Brin-Katok's local entropy formula and Birkhoff's ergodic theorem, the subset $X$ is  $f$-invariant  and of full $\mu$-measure. Given a small number $\epsilon>0$ and $x\in X$, define
\[
 \Upsilon(x)=\Big\{y\in X: |h_{\mu}(y)-h_{\mu}(x)|<\epsilon~\text{and}~|\lambda(y)-\lambda(x)|<\epsilon\Big\}.
\]
The sets $\Upsilon(x)$ form a cover of $X$ and we can choose points $y_i\in X$ for $i=1,2,\cdots$ such that each $\Upsilon_i:=\Upsilon(y_i)$ has positive $\mu$-measure and $\mu(\bigcup_{i\ge 1} \Upsilon_i)=1$. Note that each $\Upsilon_i$ is $f$-invariant, since the functions $h_\mu(x)$ and $\lambda(x)$ are $f$-invariant. Fix $i$ and let $\mu_i$ denote the normalised restriction of $\mu$ to $\Upsilon_i$. It follows from \eqref{pt-global} and the definition of $\Upsilon_i$ that
\[
h_{\mu}(y_i)-\epsilon\le h_{\mu_i}(f|\Upsilon_i)=\frac{1}{\mu(\Upsilon_i)}\int_{\Upsilon_i} h_{\mu}(x) d\mu \le h_{\mu}(y_i)+\epsilon.
\]
Let $\mathcal{M}_i(f|_J)=\{\mu\in \mathcal{M}(f|_J): \mu(\Upsilon_i)=1\}$. Since $\Upsilon_i$ is $f$-invariant, there is a one-to-one correspondence between the $f$-invariant ergodic probability measure on $\Upsilon_i$ and the measures in $\mathcal{E}_i(f|_J):=\mathcal{M}_i(f|_J)\cap \mathcal{E}(f|_J)$. Using standard arguments, one can show that $\tau(\mathcal{E}_i(f|_J))>0$ and that the normalization of $\tau_i$ of $\tau|\mathcal{M}_i(f|_J)$ provides and ergodic decomposition of $\mu_i$. Since
\[
 h_{\mu_i}(f|\Upsilon_i)=\int_{\mathcal{E}_i(f|_J)} h_{\nu}(f) d\tau_i(\nu),
\]
there exists a set $A_i\subset \mathcal{E}_i(f|_J)$ of positive $\tau_i$-measure (thus also with positive $\tau$-measure) such that $h_\nu(f)\ge h_{\mu_i}(f|\Upsilon_i)-\epsilon$ for every $\nu\in A_i$. Therefore, for each $\nu\in A_i$ and each $x\in \Upsilon_i$ we have that
\[
h_\nu(f)\ge h_{\mu_i}(f|\Upsilon_i)-\epsilon\ge h_{\mu}(y_i)-2\epsilon\ge h_{\mu}(x)-3\epsilon
\]
and
\[
\int_X \lambda(x) d\nu =\int_{\Upsilon_i} \lambda(x) d\nu \le \lambda(y_i)+\epsilon\le  \lambda(x)+2\epsilon.
\]
Note that $\int_X \lambda(x) d\nu$ is exactly the unique Lyapunov exponent $\lambda(\nu)$, since $J$ is an average conformal repeller.
Hence, for each $\nu\in A_i$ we have
\[
\frac{h_\mu(x)}{\lambda(x)}\le \frac{h_{\nu}(f)+3\epsilon}{\lambda(\nu)-2\epsilon}.
\]
The above observation together with \eqref{dim-acr-erg} yield that
\[
\frac{h_\mu(x)}{\lambda(x)}\le \dim_H\nu +C(\epsilon),~~\forall \nu\in A_i.
\]
Letting $\epsilon\to 0$, we have that
\[
\frac{h_\mu(x)}{\lambda(x)}\le \dim_H\nu, ~~\forall \nu\in A_i.
\]
Since $\tau(A_i)>0$ and $\mu(\Upsilon_i)>0$, it follows from \eqref{dim-pt-dec} that
\begin{eqnarray*}
\dim_H\mu\le\text{ess} \sup\{\dim_H\nu:~~\nu\in \mathcal{E}(f|_J)\}
\end{eqnarray*}
This completes the proof of the theorem.
\end{proof}

The formula in the previous theorem was first established by Barreira and Wolf for conformal repellers of a $C^{1+\alpha}$ map (see \cite{bw}). Here, we relax the smoothness to $C^1$ and extend their result for average conformal repellers which are indeed non-conformal (see \cite{zcb} for an example).

The following theorem relates the Hausdorff dimension of an invariant measure to the zero of measure theoretic pressure, which can be regarded as an Bowen's equation in the measure theoretic case.

\begin{theorem}\label{dim-expand}Let $J$ be an average conformal repeller of a $C^1$ map $f$, and let $\mu\in \mathcal{M}(f|_J)$ be an invariant measure on $J$. Then
\[
\dim_H\mu=t_0
\]
where $t_0$ is the unique root of the equation $P_\mu(f, -t\varphi)=0$, and $\varphi(x)=\frac{1}{d}\log (|\det(D_xf)|)$.
\end{theorem}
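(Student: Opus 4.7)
The plan is to reduce the statement to a clean optimisation over the ergodic decomposition, using Theorem~\ref{mainthm} on the pressure side and Theorem~\ref{dim-dec} on the dimension side, and then to match these two expressions.

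First, I would apply Theorem~\ref{mainthm} to the continuous potential $-t\varphi$, obtaining
\[
P_\mu(f,-t\varphi)=\text{ess} \sup\Big\{h_\nu(f)-t\!\int \varphi\,d\nu:\nu\in\mathcal{E}(f|_J)\Big\}
\]
with the essential supremum taken with respect to an ergodic decomposition $\tau$ of $\mu$. Next, following the argument in the proof of Theorem~\ref{dim-dec}, the average conformality of $J$ together with the uniform asymptotic equality of $\log\|D_xf^n\|$ and $\log m(D_xf^n)$ on $J$ gives, for any ergodic $\nu\in\mathcal{E}(f|_J)$, the identity $\lambda(\nu)=\int\varphi\,d\nu$. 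Combining this with the ergodic dimension formula \eqref{dim-acr-erg} yields
\[
h_\nu(f)-t\!\int\varphi\,d\nu=\lambda(\nu)\bigl(\dim_H\nu-t\bigr),\qquad\nu\in\mathcal{E}(f|_J).
\]

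With this identity in hand, set $t^\ast:=\text{ess}\sup\{\dim_H\nu:\nu\in\mathcal{E}(f|_J)\}$, which by Theorem~\ref{dim-dec} equals $\dim_H\mu$. Because $J$ is an expanding repeller, $\lambda(\nu)$ is uniformly bounded between two strictly positive constants (namely $\log\kappa$ and $\log\|Df\|_\infty$) over all $\nu\in\mathcal{E}(f|_J)$. I would then argue in three cases: (i) if $t>t^\ast$, the ess sup of $\lambda(\nu)(\dim_H\nu-t)$ is bounded above by a strictly negative constant, hence $P_\mu(f,-t\varphi)<0$; (ii) if $t<t^\ast$, there is a positive-$\tau$-measure set of ergodic $\nu$ with $\dim_H\nu>t$, forcing $P_\mu(f,-t\varphi)>0$; (iii) at $t=t^\ast$, the integrand is $\leq 0$ a.s., giving ess sup $\leq 0$, while the definition of ess sup furnishes, for every $\epsilon>0$, a positive-measure set with $\dim_H\nu>t^\ast-\epsilon$, bounding the ess sup below by $-\lambda_{\max}\epsilon$, hence forcing it to equal $0$. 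This proves $P_\mu(f,-t\varphi)=0$ exactly at $t=t^\ast=\dim_H\mu$, and the strict monotonicity in (i)--(ii) gives uniqueness of $t_0$.

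The main obstacle I anticipate is purely the case $t=t^\ast$: one needs that the ess sup is attained (approximately) by ergodic measures whose dimension is arbitrarily close to $t^\ast$, which in general is not automatic for an essential supremum since the measures realising near-maximal values may carry very small weight. The uniform lower bound $\lambda(\nu)\geq\log\kappa>0$ coming from the expanding-repeller hypothesis is what lets one pass from "ess sup of $\dim_H\nu$" to "ess sup of $\lambda(\nu)(\dim_H\nu-t^\ast)=0$" without losing control. The remaining verification, that $t\mapsto P_\mu(f,-t\varphi)$ is strictly monotone so $t_0$ is the unique zero, follows at once from $\lambda(\nu)>0$ uniformly.
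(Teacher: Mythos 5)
Your proposal is correct and follows essentially the same route as the paper: both reduce the problem to the ergodic decomposition via Theorem~\ref{mainthm} and Theorem~\ref{dim-dec}, and both use the ergodic formula $\dim_H\nu=h_\nu(f)/\lambda(\nu)$ together with $\lambda(\nu)=\int\varphi\,d\nu$. The only difference is organizational: the paper first obtains existence and uniqueness of $t_0$ from continuity and strict monotonicity of $t\mapsto P_\mu(f,-t\varphi)$ and then sandwiches $t_0$ against $\dim_H\mu$ from both sides, whereas you directly determine the sign of the pressure in the three regimes $t<t^\ast$, $t=t^\ast$, $t>t^\ast$ (using the uniform bounds $0<\log\kappa\le\lambda(\nu)\le\log\|Df\|_\infty$), which settles existence, uniqueness, and the identification $t_0=\dim_H\mu$ in one pass.
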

\begin{proof}Let $P(t):=P_\mu(f, -t\varphi)$, it is easy to see that the function $t\mapsto P(t)$ is continuous and strictly decreasing on the interval $[0, d]$, and $P(0)=E_\mu(f)\ge0$ and $P(d)\le 0$ by Margulis-Ruelle's inequality. Hence, there exists a unique root of the equation $P(t)=0$, say $t_0$.

Let $\tau$ be an  ergodic decomposition  of $\mu$, it follows from Theorem \ref{mainthm} that
\[
h_\nu(f)-t_0\int \varphi d\nu \le 0, ~~\tau-a.e.~\nu\in \mathcal{E}(f|_J).
\]
Since $\int \varphi d\nu =\lambda(\nu)>0$ for each $\nu\in \mathcal{E}(f|_J)$, it yields that
\[
t_0\ge \frac{h_\nu(f)}{\lambda(\nu)}=\dim_H\nu,~~~\tau-a.e.~\nu\in \mathcal{E}(f|_J).
\]
By Theorem \ref{dim-dec}, we have that
\[
t_0\ge \dim_H\mu.
\]

To prove the reverse inequality, by \eqref{dim-acr-erg} and Theorem \ref{dim-dec} we have that
\[
\dim_H\mu\ge \dim_H\nu=\frac{h_\nu(f)}{\lambda(\nu)}=\frac{h_\nu(f)}{\int \varphi d\nu},~~~\tau-a.e.~\nu\in \mathcal{E}(f|_J).
\]
This together with Corollary \ref{mp} yield that
\[
P_\nu(f, -(\dim_H\mu) \varphi )=h_\nu(f)-\dim_H\mu\int  \varphi d\nu\le 0,~~~\tau-a.e.~\nu\in \mathcal{E}(f|_J).
\]
By Theorem \ref{mainthm}, we have that
\[
P_\mu(f, -(\dim_H\mu) \varphi )\le 0.
\]
Hence,
\[
t_0\le \dim_H\mu.
\]
This completes the proof of theorem.
\end{proof}

We now consider hyperbolic diffeomorphisms and derive formula for the Hausdorff dimension of an invariant measure (not necessarily ergodic). The formulas are versions of the ones in  Theorems \ref{dim-dec} and \ref{dim-expand}. Our approach is similar to that in
Theorems \ref{dim-dec} and  \ref{dim-expand},  although it is now necessary to deal simultaneously with the stable
and unstable directions.

In the following, we will first provide a dimension formula similar to that in Theorem \ref{dim-dec} in the case of average conformal hyperbolic diffeomorphisms. Recall that $\varphi_i(x)=\frac{1}{d_i}\log \|D_xf | E^i(x)\|$ for $i=s,u$.

\begin{theorem}\label{dim-dec-ach} Let $f$ be a $C^{1}$ diffeomorphism with a compact $f$-invariant
locally maximal hyperbolic set $J$ on which $f$ is average conformal, and let $\mu$ be an
$f$-invariant  measure on $J$. For any ergodic decomposition $\tau$ of $\mu$ we have
\[
\dim_H\mu =\text{ess} \sup \{\dim_H\nu: ~\nu\in \mathcal{E}(f|_J)\}
\]
with the essential supremum taken with respect to $\tau$.
\end{theorem}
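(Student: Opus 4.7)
The plan is to mirror the proof of Theorem \ref{dim-dec} line by line, using the hyperbolic dimension formulas \eqref{dim-f-ach-inv} and \eqref{dim-f-ach-erg} in place of their expanding counterparts, and controlling the stable and unstable Lyapunov exponents simultaneously rather than a single one. Once the two inequalities are set up, the argument is almost structural.

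For the lower bound $\dim_H\mu\ge \mathrm{ess\,sup}\{\dim_H\nu:\nu\in\mathcal{E}(f|_J)\}$ I would invoke the hyperbolic analog of \cite[Proposition 4]{bw}: if $Z\subset J$ with $\mu(Z)=1$, then $\nu(Z)=1$ for $\tau$-a.e.\ ergodic component $\nu$, hence $\dim_H Z\ge \dim_H\nu$ $\tau$-a.e.; taking the infimum over all such $Z$ gives the desired bound. This part requires no new ingredients beyond what [bw] already uses.

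For the upper bound I would adapt the construction in Theorem \ref{dim-dec}. Set
\[
X_0=\{x\in J:\ h_\mu(x),\ \lambda_s(x),\ \lambda_u(x)\ \text{all exist}\},
\]
which is $f$-invariant of full $\mu$-measure by Brin--Katok, \eqref{ach-lya}, and Birkhoff applied to $\varphi_s,\varphi_u$; all three functions are $f$-invariant on $X_0$. Given $\epsilon>0$ and $x\in X_0$ let
\[
\Upsilon(x)=\{y\in X_0:\ |h_\mu(y)-h_\mu(x)|<\epsilon,\ |\lambda_s(y)-\lambda_s(x)|<\epsilon,\ |\lambda_u(y)-\lambda_u(x)|<\epsilon\},
\]
and choose $y_i\in X_0$ so that $\Upsilon_i:=\Upsilon(y_i)$ is $f$-invariant, has positive $\mu$-measure, and $\mu(\bigcup_i\Upsilon_i)=1$. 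Let $\mu_i$ be the normalized restriction of $\mu$ to $\Upsilon_i$. Exactly as in Theorem \ref{dim-dec}, $\tau$ restricts to an ergodic decomposition $\tau_i$ of $\mu_i$ supported on $\mathcal{E}_i(f|_J):=\{\nu\in\mathcal{E}(f|_J):\nu(\Upsilon_i)=1\}$, and $\tau(\mathcal{E}_i(f|_J))>0$. Integrating $h_{\mu_i}(f|\Upsilon_i)$, $\int\varphi_s\,d\mu_i$, and $\int\varphi_u\,d\mu_i$ over $\tau_i$, a standard pigeonhole yields a set $A_i\subset\mathcal{E}_i(f|_J)$ with $\tau(A_i)>0$ such that for every $\nu\in A_i$ one has $h_\nu(f)\ge h_\mu(y_i)-C\epsilon$, $\lambda_s(\nu)\le \lambda_s(y_i)+C\epsilon$, and $\lambda_u(\nu)\ge \lambda_u(y_i)-C\epsilon$ (the inequalities oriented so as to increase $\tfrac{1}{\lambda_u(\nu)}-\tfrac{1}{\lambda_s(\nu)}$). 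Since $J$ is hyperbolic, $\lambda_u(\nu)$ and $|\lambda_s(\nu)|$ are uniformly bounded below, so using \eqref{dim-f-ach-erg},
\[
h_\mu(x)\Big(\frac{1}{\lambda_u(x)}-\frac{1}{\lambda_s(x)}\Big)\le \dim_H\nu+C'(\epsilon)\quad \text{for every $x\in\Upsilon_i$ and $\nu\in A_i$},
\]
with $C'(\epsilon)\to0$. Combining with \eqref{dim-f-ach-inv} and letting $\epsilon\to0$ gives $\dim_H\mu\le \mathrm{ess\,sup}\{\dim_H\nu\}$.

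The main obstacle, as opposed to Theorem \ref{dim-dec}, is having to track three $f$-invariant quantities simultaneously through the ergodic decomposition, and to choose the orientations of the approximations of $\lambda_s(\nu)$ and $\lambda_u(\nu)$ carefully so that the combination $\tfrac{1}{\lambda_u}-\tfrac{1}{\lambda_s}$ moves in the right direction. Everything else is a bookkeeping adaptation of the expanding argument, and the uniform hyperbolicity of $J$ ensures that the denominators in \eqref{dim-f-ach-erg} never blow up, so the continuity argument that sends $\epsilon\to0$ goes through without incident.
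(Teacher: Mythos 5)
Your argument is the paper's argument almost verbatim: the same lower bound via the ergodic decomposition, the same sets $\Upsilon_i$ controlling $h_\mu$, $\lambda_s$, $\lambda_u$ simultaneously, the same passage to $\mu_i$, $\tau_i$, $A_i$, and the same use of \eqref{dim-f-ach-erg} and \eqref{dim-f-ach-inv}. Two steps are mis-stated, though, and precisely at the point you single out as "the main obstacle." First, the orientations of your exponent inequalities are the wrong halves: to get $\tfrac{1}{\lambda_u(x)}-\tfrac{1}{\lambda_s(x)}\le \tfrac{1}{\lambda_u(\nu)}-\tfrac{1}{\lambda_s(\nu)}+O(\epsilon)$ you need $1/\lambda_u(\nu)$ and $-1/\lambda_s(\nu)$ bounded \emph{below}, i.e. $\lambda_u(\nu)\le\lambda_u(y_i)+C\epsilon$ and $\lambda_s(\nu)\ge\lambda_s(y_i)-C\epsilon$; you wrote the reverse inequalities, from which your displayed conclusion does not follow. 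Second, a "standard pigeonhole" applied simultaneously to three integrals does not by itself produce one set $A_i$ of positive $\tau_i$-measure on which all three one-sided bounds hold, since three positive-measure sets can have null intersection. Both issues evaporate once you notice that the exponents need no pigeonhole at all: every $\nu\in\mathcal{E}_i(f|_J)$ has $\nu(\Upsilon_i)=1$ and $|\lambda_j(\cdot)-\lambda_j(y_i)|<\epsilon$ on $\Upsilon_i$, so $|\lambda_j(\nu)-\lambda_j(y_i)|\le\epsilon$ holds two-sidedly for $j=s,u$ and for \emph{every} such $\nu$; only the entropy requires selecting a positive-$\tau_i$-measure subset, exactly as in Theorem \ref{dim-dec}. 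This is how the paper proceeds, writing $\lambda_s(\nu)=\int_{\Upsilon_i}\lambda_s\,d\nu\ge\lambda_s(x)-2\epsilon$ and $\lambda_u(\nu)\le\lambda_u(x)+2\epsilon$ for $x\in\Upsilon_i$, and then using the uniform hyperbolicity bounds (as you correctly do) to see that the error term tends to zero with $\epsilon$.
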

\begin{proof} The method of proof is similar to that of Theorem \ref{dim-dec}. We first note that
\[
\dim_H\mu\ge \text{ess} \sup \{\dim_H\nu: ~\nu\in \mathcal{E}(f|_J)\}.
\]

Let
\[
X=\{x\in J: \lambda_s(x),~\lambda_u(x)~\text{in \eqref{ach-lya}}~\text{and}~h_\mu(x)~\text{are well-defined~} \},
\]
 the subset $X$ is  $f$-invariant  and of full $\mu$-measure. Given a small number $\epsilon>0$ and $x\in X$, define
\[
 \Upsilon(x)=\Big\{y\in X: |h_{\mu}(y)-h_{\mu}(x)|<\epsilon~\text{and}~|\lambda_i(y)-\lambda_i(x)|<\epsilon,~~i=s,u\Big\}.
\]
Choose now points $y_i\in X$ for $i=1,2,\cdots$ such that the $f$-invariant sets
$\Upsilon_i= \Upsilon(yi)$ satisfy $\mu(\Upsilon_i) > 0$ for each $i$, and $\mu(\bigcup_{i\ge 1} \Upsilon_i)=1$. Fix $i$ and consider the normalized restriction $\mu_i$ of $\mu$ to $\Upsilon_i$. Using the same arguments as in the proof of Theorem \ref{dim-dec}, one can show that there exists a set $A_i\subset \mathcal{E}_i(f|_J)$ of positive $\tau_i$-measure (here $\tau_i$ is the normalization of $\tau|\mathcal{M}_i(f|_J)$) such that for each $\nu\in A_i$ and $x\in \Upsilon_i$ we have that
\[
h_\nu(f)\ge h_{\mu_i}(f|\Upsilon_i)-\epsilon\ge h_{\mu}(y_i)-2\epsilon\ge h_{\mu}(x)-3\epsilon
\]
and
\[
\lambda_s(\nu)=\int_{\Upsilon_i} \lambda_s(x)d\nu\ge \lambda_s(x)-2\epsilon~~\text{and}~~\lambda_u(\nu)=\int_{\Upsilon_i} \lambda_u(x)d\nu\le \lambda_u(x)+2\epsilon.
\]
Consequently, we have that
\[
h_{\mu}(x)\Big(\frac{1}{\lambda_u(x)}-\frac{1}{\lambda_s(x)}\Big)\le (h_\nu(f)+3\epsilon)\Big(\frac{1}{\lambda_u(\nu)-2\epsilon}-\frac{1}{\lambda_s(\nu)+2\epsilon}\Big)
\]
for each $\nu\in A_i$. Since $\tau(A_i)>0$, it follows from \eqref{dim-f-ach-erg} and \eqref{dim-f-ach-inv} that
\[
\dim_H\mu\le \text{ess} \sup \{\dim_H\nu: ~\nu\in \mathcal{E}(f|_J)\}+C(\epsilon)
\]
where $\epsilon\mapsto C(\epsilon)$  is a function (independent of $i$ and $\nu$) that tends to zero as $\epsilon\to 0$. This completes the proof of the theorem.
\end{proof}

The above theorem  extends Theorem 10 in \cite{bw} in twofold: first it relaxes  the smoothness of the diffeomorphism to $C^1$; on the other hand, it considers the non-conformal case in the hyperbolic setting.
\begin{theorem}\label{dim-ach-mt} Let $f$ be a $C^{1}$ volume-preserving diffeomorphism with a compact $f$-invariant locally maximal hyperbolic set $J$ on which $f$ is average conformal, and let $\mu$ be an $f$-invariant  measure on $J$. Then
\[
\dim_H\mu=t_s+t_u
\]
where $t_s$ and $t_u$ are respectively the unique root of the following equations
\[
P_\mu(f,t\varphi_s)=0~~\text{and}~~P_\mu(f,-t\varphi_u)=0.
\]
\end{theorem}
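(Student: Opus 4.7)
The plan is to follow the strategy used for Theorem \ref{dim-expand}, but now with the stable and unstable bundles contributing simultaneously; the crucial new ingredient is volume preservation, which forces the two pieces to lock together through a common essential supremum. Let $\tau$ denote an ergodic decomposition of $\mu$.

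First I would establish existence and uniqueness of $t_u$ and $t_s$. Since $\varphi_s$ and $\varphi_u$ are continuous on $J$, both pressure functions $t\mapsto P_\mu(f,-t\varphi_u)$ and $t\mapsto P_\mu(f,t\varphi_s)$ are continuous; uniform hyperbolicity yields a uniform lower bound on $\lambda_u(\nu)>0$ and a uniform upper bound on $\lambda_s(\nu)<0$ for every $\nu\in\mathcal{E}(f|_J)$, which (together with the representation of pressure in the next step) makes these functions strictly monotone. Combining this with $P_\mu(f,0)=E_\mu(f)\ge 0$ and the uniform bound $h_\nu(f)\le h_{\topp}(f|_J)<\infty$, both equations have unique positive roots $t_u,t_s$.

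Second, applying Theorem \ref{mainthm} together with the identity $\int\varphi_i\,d\nu=\lambda_i(\nu)$ for ergodic $\nu$ (for $i=s,u$) gives
\[
P_\mu(f,-t\varphi_u)=\text{ess}\sup_\tau\bigl\{h_\nu(f)-t\lambda_u(\nu):\nu\in\mathcal{E}(f|_J)\bigr\},
\]
and an analogous formula for $P_\mu(f,t\varphi_s)$. Because $\lambda_u(\nu)$ is uniformly bounded between two positive constants (and $-\lambda_s(\nu)$ likewise), setting each pressure to zero rewrites equivalently as
\[
t_u=\text{ess}\sup_\tau\Big\{\frac{h_\nu(f)}{\lambda_u(\nu)}:\nu\in\mathcal{E}(f|_J)\Big\},\qquad t_s=\text{ess}\sup_\tau\Big\{\frac{h_\nu(f)}{-\lambda_s(\nu)}:\nu\in\mathcal{E}(f|_J)\Big\}.
\]

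Third, I would feed in the volume-preserving hypothesis: $|\det Df|\equiv 1$, hence $\int(d_s\varphi_s+d_u\varphi_u)\,d\nu=0$ for every invariant $\nu$ (the coboundary term relating the Riemannian volume form to the product form on $E^s\oplus E^u$ averages out), so $d_s\lambda_s(\nu)+d_u\lambda_u(\nu)=0$ for every ergodic $\nu$. This yields the pointwise identity
\[
\frac{h_\nu(f)}{-\lambda_s(\nu)}=\frac{d_s}{d_u}\cdot\frac{h_\nu(f)}{\lambda_u(\nu)},
\]
so the two essential suprema are proportional: $t_s=(d_s/d_u)t_u$, whence $t_s+t_u=(d/d_u)t_u$. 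In parallel, the same relation rewrites $1/\lambda_u(\nu)-1/\lambda_s(\nu)=d/(d_u\lambda_u(\nu))$, so \eqref{dim-f-ach-erg} becomes $\dim_H\nu=d\,h_\nu(f)/(d_u\lambda_u(\nu))$, and Theorem \ref{dim-dec-ach} gives
\[
\dim_H\mu=\text{ess}\sup_\tau\Big\{\frac{d\,h_\nu(f)}{d_u\lambda_u(\nu)}:\nu\in\mathcal{E}(f|_J)\Big\}=\frac{d}{d_u}t_u=t_s+t_u.
\]

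The main obstacle is conceptual rather than computational: one must be sure that the essential suprema defining $t_s$ and $t_u$ are attained on the same ergodic components of $\tau$, for otherwise $t_s+t_u$ would generically overshoot $\dim_H\mu$. The identity $d_s\lambda_s+d_u\lambda_u=0$ coming from volume preservation is precisely what forces fiberwise proportionality of the two ratios across the ergodic decomposition, and it is this alignment that makes the Bowen-type dimension equation split additively into stable and unstable contributions.
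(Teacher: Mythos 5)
Your argument is correct and rests on the same three pillars as the paper's proof: Theorem \ref{mainthm} to express the pressures as essential suprema over the ergodic decomposition, Theorem \ref{dim-dec-ach} together with \eqref{dim-f-ach-erg} to express $\dim_H\mu$ the same way, and volume preservation to tie the stable and unstable exponents together. Where you diverge is in how the last ingredient is deployed. The paper argues by two inequalities: for the lower bound it adds the pointwise bounds $t_s\ge -h_\nu(f)/\lambda_s(\nu)$ and $t_u\ge h_\nu(f)/\lambda_u(\nu)$ and invokes \eqref{dim-f-ach-erg}; for the upper bound it tests the potentials $-(\tfrac12\dim_H\mu)\varphi_u$ and $(\tfrac12\dim_H\mu)\varphi_s$ against Theorem \ref{mainthm}, using the identity $\int\varphi_u\,d\nu=-\int\varphi_s\,d\nu$, i.e.\ $\lambda_u(\nu)=-\lambda_s(\nu)$. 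You instead solve each Bowen equation explicitly, obtaining $t_u=\text{ess}\sup_\tau\{h_\nu(f)/\lambda_u(\nu)\}$ and $t_s=\text{ess}\sup_\tau\{h_\nu(f)/(-\lambda_s(\nu))\}$ (your justification via the uniform two-sided bounds on the exponents is exactly what makes this conversion legitimate), and then use the relation $d_s\lambda_s(\nu)+d_u\lambda_u(\nu)=0$ to make the two ratios fiberwise proportional, so both essential suprema are realized on the same ergodic components and the sum collapses to $\dim_H\mu$. Your diagnosis of the potential obstruction --- that the two suprema could a priori sit on different components --- is the right one, and your proportionality argument is the cleaner resolution; note also that your form of the volume-preservation identity, $d_s\lambda_s+d_u\lambda_u=0$, is the one that follows in general from $|\det Df|\equiv 1$ (the coboundary from the angle between $E^s$ and $E^u$ integrates to zero), whereas the paper's $\lambda_u(\nu)=-\lambda_s(\nu)$ and its factor $\tfrac12\dim_H\mu$ implicitly use $d_s=d_u$; your version covers the general case $d_s\neq d_u$ with the factors $d_s/d_u$ and $d/d_u$ in the right places.
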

\begin{proof}The proof is similar to that of Theorem \ref{dim-expand}. It is easy to see that there exist unique roots $t_s,t_u$ of the equations $P_\mu(f,t\varphi_s)=0$ and $P_\mu(f,-t\varphi_u)=0$ respectively.

Let $\tau$ be an  ergodic decomposition  of $\mu$, it follows from Theorem \ref{mainthm} that
\[
h_\nu(f)+t_s \lambda_s(\nu) =h_\nu(f)+t_s\int \varphi_s d\nu \le 0, ~~\tau-a.e.~\nu\in \mathcal{E}(f|_J).
\]
Hence,
\[
t_s\ge -\frac{h_\nu(f)}{\lambda_s(\nu)}, ~~\tau-a.e.~\nu\in \mathcal{E}(f|_J).
\]
Similarly, one can show that
\[
t_u\ge \frac{h_\nu(f)}{\lambda_u(\nu)}, ~~\tau-a.e.~\nu\in \mathcal{E}(f|_J).
\]
The above estimates together with \eqref{dim-f-ach-erg}  yield that
\[
t_s+t_u\ge \dim_H\nu, ~~\tau-a.e.~\nu\in \mathcal{E}(f|_J).
\]
It follows from  Theorem \ref{dim-dec-ach} that
\[
t_s+t_u\ge \dim_H\mu.
\]

To prove the reverse inequality, by \eqref{dim-f-ach-erg} and Theorem \ref{dim-dec-ach} we have that
\[
\dim_H\mu\ge \dim_H\nu=h_\nu(f)\Big(\frac{1}{\lambda_u(\nu)}-\frac{1}{\lambda_s(\nu)}\Big)=h_\nu(f)
\Big(\frac{1}{\int \varphi_u d\nu}-\frac{1}{\int \varphi_s d\nu}\Big)
\]
for $\tau$-almost every $\nu\in \mathcal{E}(f|_J)$. Since $f$ is volume preserving, we have that
\[
\int \varphi_u d\nu=-\int \varphi_s d\nu,~~\forall \nu\in \mathcal{E}(f|_J).
\]
Hence,
\[
P_\nu(f, -(\frac 1 2\dim_H\mu) \varphi_u)=h_\nu(f)-\frac 1 2\dim_H\mu\int \varphi_u d\nu\le 0,~~\tau-a.e.~\nu\in \mathcal{E}(f|_J).
\]
It follows from Theorem \ref{mainthm} that
\[
P_\mu(f, -(\frac 1 2\dim_H\mu) \varphi_u)\le 0.
\]
Hence,
\[
t_u\le \frac 1 2\dim_H\mu.
\]
Similarly, one can show that
\[
t_s\le \frac 1 2\dim_H\mu.
\]
Hence,
\[
t_u+ t_s\le \dim_H\mu.
\]
This completes the proof of the theorem.
\end{proof}

\begin{question}
Whether the above theorem remains true if the diffeomorphism $f$ is not volume-preserving?
\end{question}


\subsection*{Acknowledgments} Part of the work was done when the third author visited ICTP and Fudan University, whose excellent research conditions are greatly acknowledged.


\bibliographystyle{alpha}
\bibliography{bib}

\end{document}